\definecolor{linkred}{rgb}{0.48,0.1,0.05}
\definecolor{linkblue}{RGB}{16, 78, 139}
	\titlespacing{\section}{0pt}{12pt}{0pt}
	\titlespacing{\subsection}{0pt}{6pt}{0pt}
\long\def\@footnotetext#1{%
\H@@footnotetext{%
\ifHy@nesting 
\hyper@@anchor{\@currentHref}{#1}%
\else 
\Hy@raisedlink{\hyper@@anchor{\@currentHref}{\relax}}#1%
\fi 
}}
\def\@footnotemark{%
\leavevmode 
\ifhmode\edef\@x@sf{\the\spacefactor}\nobreak\fi 
\H@refstepcounter{Hfootnote}%
\hyper@makecurrent{Hfootnote}%
\hyper@linkstart{link}{\@currentHref}%
\@makefnmark 
\hyper@linkend 
\ifhmode\spacefactor\@x@sf\fi 
\relax 
}%
\renewcommand*\@footnotemark{%
\leavevmode 
\ifhmode 
\edef\@x@sf{\the\spacefactor}%
\FN@mf@check 
\nobreak 
\fi 
\H@refstepcounter{Hfootnote}%
\hyper@makecurrent{Hfootnote}%
\hyper@linkstart{link}{\@currentHref}%
\@makefnmark 
\hyper@linkend 
\ifFN@pp@towrite 
\FN@pp@writetemp 
\FN@pp@towritefalse 
\fi 
\FN@mf@prepare 
\ifhmode\spacefactor\@x@sf\fi 
\relax%
}%
\theoremstyle{plain}
\newtheorem{theorem}{Theorem}[section]
\newtheorem{lemma}[theorem]{Lemma}
\newtheorem{corollary}[theorem]{Corollary}
\theoremstyle{definition}
\newcommand{\diam}{{\rm diam}}
\newcommand{\contract}{\mathord{\varparallelinv}}
\newcommand{\FS}{\mathcal F {(\Sigma_n)}}
\newcommand{\MF}{\mathcal M \mathcal F}
\newcommand{\MFS}{{\mathcal M \mathcal F} (\Sigma_n)}
\newcommand{\mcg}{{\rm Mod}}
\long\def\symbolfootnote[#1]#2{\begingroup%
\def\thefootnote{\fnsymbol{footnote}}\footnote[#1]{#2}\endgroup}
\def\blfootnote{\xdef\@thefnmark{}\@footnotetext}
\begin{document}

{\Large \bfseries \sc Modular flip-graphs of one holed surfaces}

{\bfseries Hugo Parlier\symbolfootnote[1]{\normalsize Research supported by Swiss National Science Foundation grants PP00P2\textunderscore 128557 and PP00P2\textunderscore153024
}, Lionel Pournin\symbolfootnote[2]{\normalsize Research funded by Ville de Paris {\'E}mergences project ``Combinatoire {\`a} Paris''.\\
{\em Key words:} flip-graphs, triangulations of surfaces, combinatorial moduli spaces
}}

\vspace{0.2cm}
{\em Abstract.}
We study flip-graphs of triangulations on topological surfaces where distance is measured by counting the number of necessary flip operations between two triangulations. We focus on surfaces of positive genus $g$ with a single boundary curve and $n$ marked points on this curve; we consider triangulations up to homeomorphism with the marked points as their vertices. Our main results are upper and lower bounds on the maximal distance between triangulations depending on $n$ and can be thought of as bounds on the diameter of flip-graphs up to the quotient of underlying homeomorphism groups. The main results assert that the diameter of these quotient graphs grows at least like $5n/2$ for all $g\geq 1$. Our upper bounds grow at most like $[4 -1/(4g)]n$ for $g\geq 2$, and at most like $23n/8 $ for the torus.
\section{Introduction}\label{sec:intro}

The set of triangulations of a given surface can be given a geometry by defining the distance between two triangulations as the necessary number of flip operations needed to transform one of them into the other. As stated, this definition is somewhat vague, but if the surface is a (Euclidean) polygon and we think of triangulations as being geometric (realized by line segments), then a flip consists in removing an edge from a triangulation and replacing it by the unique other edge so that the result is still a triangulation of the polygon. Given this geometry, the set of triangulations of a polygon is well studied: it is the graph of the associahedron \cite{Lee1989, Stasheff1963, Stasheff2012, Tamari1954}. In particular its diameter is $2n-10$ for $n>12$ \cite{Pournin2014, SleatorTarjanThurston1988}.

The graph of the associahedron can be defined in purely topological terms: a polygon is a topological disk with $n$ marked points on its boundary and a triangulation is a maximal collection of disjoint isotopy classes of arcs with endpoints in the marked points. (The isotopies are relative to the endpoints and by disjoint it is meant disjoint in their interiors.) Flip transformations are topological; one way of defining them is that two triangulations are related by a flip if they differ only in one edge. This topological description works when one replaces the disk by any finite type surface with marked points, provided there are marked points on boundary curves. However, provided the surface has enough topology (for instance if it has positive genus), the flip-graph is infinite and has infinite diameter. The mapping class group (self-homeomorphisms up to isotopy) of the underlying surface acts nicely on it: indeed it is basically the isomorphism group of the graph (see Section \ref{sec:prelim} for precise definitions, statements, and references). As such, we can quotient flip-graphs to obtain well defined finite graphs which represent all possible types of triangulations and we can measure distances between them. We call these graphs {\it modular flip-graphs} and we are interested in their geometry. 

In a previous paper \cite{ParlierPournin2014}, we explored the diameter of these graphs for \emph{filling surfaces}: these surfaces have a privileged boundary curve, but otherwise arbitrary topology. In particular, they were allowed to have interior marked points, more than one boundary, and arbitrary genus. Here we focus our attention to the special case of \emph{one holed surfaces} that have a unique boundary curve and no interior marked points. We investigate the growth of the diameter of the corresponding modular flip-graphs in function of the number of marked points on the boundary, while the genus is fixed. These points are \emph{labelled}: we quotient by homeomorphisms that leave them individually fixed. 

The case that we focus on most is the torus; it provides a natural variant on the case of the disk and, as we shall see, is already quite intriguing. For this surface we are able to prove the following.

\begin{theorem}\label{thm:torus}
Let $\Sigma_n$ be a one holed torus with $n$ marked points on the boundary and let $\mathcal{MF}(\Sigma_n)$ be its modular flip-graph. Then
$$
\left\lfloor{ \frac{5}{2} n }\right\rfloor-2 \leq  \diam(\mathcal{MF}(\Sigma_n)) \leq \frac{23}{8}  n + 8\mbox{.}
$$
\end{theorem}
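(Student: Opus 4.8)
The plan is to treat the two inequalities separately, since they require genuinely different techniques: the lower bound is a combinatorial/topological obstruction argument, while the upper bound is an explicit algorithmic construction.

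\emph{Lower bound $\lfloor 5n/2\rfloor - 2$.} The strategy is to produce two triangulations $T$ and $T'$ of $\Sigma_n$ that are provably far apart. The natural choice for $T'$ is some ``standard'' triangulation (for instance one with few boundary-to-boundary arcs but many parallel copies of a handle-crossing arc, or conversely one in which the boundary edges cut off many ears). To prove a lower bound on $\diam$ one needs a quantity that changes slowly along flips but differs a lot between $T$ and $T'$. I would look for a function $f$ on triangulations — built from counting arcs of a prescribed topological type (e.g.\ arcs that run through the handle, or the number of ``ears'', i.e.\ triangles with two boundary sides) — such that a single flip changes $f$ by at most a bounded amount, together with a parity or congruence constraint forcing certain flips to be ``wasted''. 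The coefficient $5/2$ strongly suggests a weighting argument: assign each flip a cost, show that progress toward $T'$ on two independent counts (say, the number of interior arcs of one type and the number of ears) can only be made at an average rate of $2/5$ per flip, because decreasing one count generically increases the other. This kind of potential-function bookkeeping, calibrated so that the worst case gives exactly $5/2$ per marked point, is the heart of the argument; getting the constant $-2$ right is a matter of carefully handling the base case (small $n$) and the endpoints of the flip path.

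\emph{Upper bound $23n/8 + 8$.} Here the plan is constructive: fix a convenient target triangulation $T_\ast$ of $\Sigma_n$ (one with an explicit, highly structured form — say, a ``fan'' on the boundary polygon together with $O(1)$ arcs carrying all the genus), and show that \emph{every} triangulation can be flipped to $T_\ast$ in at most $\tfrac{23}{16}n + 4$ flips; the triangle inequality then doubles this to give the diameter bound. The procedure would go in stages: (1) use boundedly many flips to isolate the topology of the handle into a fixed bounded subsurface, reducing to a disk-like region with $\approx n$ marked points plus a controlled boundary; (2) triangulate that region toward a canonical fan. Stage (2) is where the precise constant $23/8$ must come from: rather than the naive ``each marked point costs a constant number of flips'', one amortizes — e.g.\ by always flipping to create a new ear or to extend the fan, charging flips to marked points in a way that shares cost between neighbors, so that groups of several marked points are handled together at sub-constant marginal cost. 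The fraction $23/8$ (note $23/8 < 3$, beating the generic disk-to-fan bound) indicates a clever grouping that I would need to discover, presumably by analyzing local configurations of $2$ or $3$ consecutive ears and the handle-crossing arcs, and optimizing the flip count per configuration.

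\emph{Main obstacle.} I expect the genuinely hard part to be the upper bound's amortized analysis — in particular, proving that the handle can always be ``parked'' in a bounded region using only $O(1)$ flips regardless of how the incoming triangulation wraps around it, and then verifying that the subsequent disk-reduction really achieves the $23/8$ rate rather than $3$. The lower bound, by contrast, should reduce to identifying the right pair of monovariants and a clean induction on $n$; the risk there is only in matching the constant, not in the method. A secondary subtlety throughout is that we work in the \emph{modular} (quotient) flip-graph with labelled marked points, so every topological count must be checked to be invariant under the allowed homeomorphisms, and the distance estimates must be stated for representatives and shown to pass to the quotient.
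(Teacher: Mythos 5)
Your proposal diverges from the paper's actual proof on both bounds, and in ways that I think are not merely cosmetic.

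\textbf{Lower bound.} The paper does not use a potential-function or monovariant argument. Instead it constructs two explicit triangulations $A_n^-$ and $A_n^+$ (built by re-triangulating an ear of a zigzag triangulation $Z_n$ of the disk, with the genus ``hidden'' inside a loop at one corner), and proves the recursion
$$
d(A_n^-,A_n^+)\geq\min\bigl(d(A_{n-1}^-,A_{n-1}^+)+3,\;d(A_{n-2}^-,A_{n-2}^+)+5\bigr)
$$
by a careful case analysis of the first flip incident to the boundary arc $\alpha_n$ along a geodesic (Lemmas 4.1--4.3), using the vertex-deletion machinery of Theorem~2.1 (``if $f$ flips are incident to $\alpha_p$ along some geodesic, then $d(U,V)\geq d(U{\contract}p,V{\contract}p)+f$''). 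The $5/2$ appears as $5$ flips per $2$ deleted vertices in the worst branch of this recursion, not as an average flip-cost against a slowly-changing invariant. A Lipschitz-type potential argument is a legitimate general technique, but it is not what the paper does, and I am not aware of any potential function on triangulations of $\Sigma_n$ that both changes by at most $2/5$ per flip and separates two triangulations by $\sim n$; you would need to actually produce one, and your sketch does not. Your remark that the lower bound ``should reduce to identifying the right pair of monovariants and a clean induction'' badly underestimates this side of the problem --- the paper explicitly states that the lower bounds are historically the harder direction, and Section~4 is by far the longer and more delicate half of the argument.

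\textbf{Upper bound.} This is where I see a concrete error in strategy. You propose fixing a canonical target $T_\ast$ and showing every triangulation reaches it in $\tfrac{23}{16}n+O(1)$ flips, then doubling. The paper does \emph{not} go through a fixed canonical triangulation, and for a good reason: given $T$ and $T'$, it picks cutting arcs $\beta,\gamma\subset T$ and $\beta',\gamma'\subset T'$, and then \emph{adaptively} chooses a fan apex $a_0$ in a gap of $\geq\lceil n/8\rceil$ consecutive marked points that avoids all $8$ endpoints of $\beta,\gamma,\beta',\gamma'$. Both triangulations are fanned toward this common $a_0$ ($\leq n+1$ flips each), the cutting arcs are flipped into loops at $a_0$, and then the four ``hands'' are swept and bound/unbound to align the two triangulations with each other --- not with a canonical model. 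The sweep costs only about $n-\lceil n/8\rceil$ moving flips \emph{total} (not per direction), because both triangulations' hands already live in the same $7n/8$ arc. That adaptive choice of $a_0$ and the meet-in-the-middle are exactly where the fraction $1/8$ (hence $23/8<3$) comes from. With a fixed $T_\ast$, an adversarial $T$ can place its handle-crossing arcs so that you pay the full $\sim n$ on the hand sweep on each side and there is no gap in which to save; you would land at $\geq 4n+O(1)$, not $\tfrac{23}{8}n+O(1)$. Your ``$23/16$ per direction, double it'' also does not match the actual decomposition, which is asymmetric: roughly $n$ per side for fanning plus $\tfrac{7}{8}n$ total for alignment. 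So the specific amortization you describe would not reach the stated constant.

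In short: the constructions (zigzag-based $A_n^\pm$ and vertex deletion for the lower bound; adaptive apex $a_0$ and hand-sweeping for the upper bound) are the essential content of the proof, and neither appears in your sketch. Your assessment of which bound is hard is also reversed.
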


There is a clear gap between our upper and lower bounds (on the order of $3n/8$ for large $n$) that we are unable to close; in fact it is even tricky to guess what the correct growth rate might be. It could very well be that the lower bounds can be improved but that the necessary combinatorics are currently out of our reach or, on the contrary, it might be that the upper bounds are considerably improvable (or both!). We point out that, in the instances where matching upper and lower bounds are known \cite{ParlierPournin2014, Pournin2014, SleatorTarjanThurston1988} the lower bounds have always been the more difficult ones to obtain. 

The methods of the above theorem can be used to deduce more general results about surfaces of genus $g\geq 1$.

\begin{theorem}\label{thm:genusg}
Let $\Sigma_n$ be a one holed surface of genus $g\geq 1$ with $n$ marked points on the boundary and let $\mathcal{MF}(\Sigma_n)$ be its modular flip-graph. Then
$$
\left\lfloor{ \frac{5}{2} n }\right\rfloor-2 \leq  \diam(\mathcal{MF}(\Sigma_n)) \leq \left(4 - \frac{1}{4g}\right)  n + K_g\mbox{,}
$$
where $K_g$ only depends on $g$. 
\end{theorem}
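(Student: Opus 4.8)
The lower bound is literally the inequality already proved in Theorem~\ref{thm:torus}, and I would obtain it from the same argument after checking that that construction invokes only $g\ge 1$ and never the precise value of $g$. In outline, one fixes two triangulations $T_-$ and $T_+$ of $\Sigma_n$ together with a potential $\Phi$ on the vertices of $\mathcal{MF}(\Sigma_n)$ that changes by at most $1$ along each edge of the graph and satisfies $\Phi(T_-)-\Phi(T_+)\ge\lfloor 5n/2\rfloor-2$; here $\Phi$ is a weighted count of the edges of a triangulation measured against $T_+$, in the spirit of Sleator--Tarjan--Thurston and of Pournin, with the extra feature that the edges interacting with the handle are given larger weight --- this is what lifts the estimate from roughly $2n$ to roughly $5n/2$ --- and the whole construction descends to the modular quotient. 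Since a single handle already carries the argument, the bound is insensitive to $g$ and transfers verbatim.

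For the upper bound I would bound the radius of $\mathcal{MF}(\Sigma_n)$ around a fixed canonical triangulation $\Delta$ and then appeal to the triangle inequality: it suffices to show $d(T,\Delta)\le\bigl(2-\tfrac1{8g}\bigr)n+\tfrac12 K_g$ for every triangulation $T$, after which $\diam(\mathcal{MF}(\Sigma_n))\le\bigl(4-\tfrac1{4g}\bigr)n+K_g$. The canonical $\Delta$ is assembled from a bounded \emph{genus core} --- an embedded one-holed genus-$g$ subsurface carrying only $O(g)$ of the marked points, triangulated in a fixed minimal way, whose complement in $\Sigma_n$ is a disc $D$ with $n-O(g)$ boundary vertices --- glued to an essentially fan-shaped triangulation of $D$. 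The transformation $T\rightsquigarrow\Delta$ then runs in three stages. First, one uses that $T$ already contains a reducing system of only $2g$ interior arcs (the arcs dual to the edges of $T$ outside a spanning tree of its dual graph), so that cutting $T$ along them yields a triangulation of an $m$-gon with $m=n+O(g)$, and one normalises the handle, i.e.\ produces a triangulation containing the canonical core; this is the costly stage and accounts for roughly $n+O_g(1)$ flips, because the essential core arcs, unlike the diagonals of a fan, cannot be ``peeled'' and genuinely have to be dragged across the triangulation. Second, one flips the resulting triangulation of the $m$-gon toward the fan of $D$ without disturbing the core, which costs at most $m-3$ flips by the associahedron estimate --- but only for those polygon diagonals whose supporting quadrilateral remains embedded in $\Sigma_n$, so one must proceed carefully near the core. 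Third, one cleans up the $O_g(1)$ edges that straddle the core and $D$. Summing gives $d(T,\Delta)\le 2n+O_g(1)$ as a first approximation.

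The improvement from $4$ to $4-\tfrac1{4g}$ comes from an amortised refinement of the bookkeeping above. Rather than paying twice for each of the $\approx n$ canonical diagonals of $D$ (once to remove a wrong edge, once to install the right one), one isolates a block of $\approx n/(4g)$ diagonals --- roughly one ``run'' of real boundary sides between consecutive cut arcs of the reducing system --- that can be built along a growing fan at a cost of a single flip apiece, each such flip making strictly irreversible progress exactly as in the associahedron radius bound. This saves about $\tfrac12\cdot\tfrac{n}{4g}$ flips in the radius estimate and hence about $\tfrac{n}{4g}$ in the diameter. For $g=1$ a sharper version of precisely this accounting --- the content of Theorem~\ref{thm:torus} --- replaces $\tfrac{15}{4}$ by $\tfrac{23}{8}$; the general estimate here is a deliberately coarser cousin of it.

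The main obstacle, absorbing essentially all the work, is the handle-normalisation in the first stage together with making the overhead $K_g$ uniform in $n$. A fixed topological arc can cross a given triangulation $\Theta(n)$ times, so one cannot simply insert the prescribed core arcs; instead one must extract whatever genus-carrying configuration $T$ happens to contain --- of bounded combinatorial complexity, but in an uncontrolled position --- and show that it can be flipped into canonical position while disturbing only $O_g(1)$ nearby edges, the displacement of the bulk of the triangulation costing at most $n+O_g(1)$ flips. Closely tied to this is the flippability constraint flagged in the second stage: an interior edge whose two adjacent triangles fail to bound an embedded quadrilateral in $\Sigma_n$ is not flippable, which is what forces the fan assembly to be done away from the handle and is precisely the phenomenon that stops the radius from collapsing to order $n$. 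The remaining ingredients --- the polygon estimate and the final clean-up --- are routine.
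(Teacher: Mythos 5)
Your overall plan reproduces the statement but not the paper's argument, and for the upper bound there is a genuine gap. On the lower bound: the conclusion you draw --- that the $\lfloor 5n/2\rfloor-2$ bound from Theorem~\ref{thm:torus} carries over unchanged because only $g\geq1$ is ever used --- is correct, but the mechanism you describe is not the one in the paper, nor is it one I see how to make work here. The paper does not build a mapping-class-group-invariant Sleator--Tarjan--Thurston-style weight or potential $\Phi$; instead it constructs two explicit triangulations $A_n^{\pm}$ (a zigzag triangulation $Z_n$ of the disk, with an ear replaced by a loop enclosing a triangulated copy of $\Sigma_1$) and proves, via the vertex-deletion projection $\MF(\Sigma_n)\to\MF(\Sigma_{n-1})$ of Theorem~\ref{Atheorem.2.1} and the ear estimate of Lemma~\ref{Alemma.2.1}, a two-step recursion $d(A_n^-,A_n^+)\geq\min\{d(A_{n-1}^-,A_{n-1}^+)+3,\,d(A_{n-2}^-,A_{n-2}^+)+5\}$. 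The factor $5/2$ is a fixed point of that recursion; it is not obtained by grading edge weights near the handle.

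On the upper bound the proposal is structurally different from the paper, and the difference is where the argument breaks. You fix a canonical target $\Delta$ (genus core plus fan, with a fixed apex $a_0$), bound the radius $d(T,\Delta)$, and double. The paper never fixes a target: given $T$ and $T'$ it takes reducing systems of $2g$ arcs in \emph{each}, observes that together these have at most $8g$ endpoints on the boundary, and therefore chooses $a_0$ in a run of $\geq n/(8g)$ consecutive boundary vertices avoiding \emph{all} of them. Both $T$ and $T'$ are then fanned at this common $a_0$, and --- this is the crux --- because the run near $a_0$ is already identical in the two resulting triangulations, the $4g$ ``hands'' need only travel across $(1-\tfrac{1}{8g})n$ boundary sides in each of the two triangulations. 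That joint choice of $a_0$ is exactly the source of the $-\tfrac{1}{4g}n$; it is unavailable once $\Delta$, and hence $a_0$, is fixed in advance, since $T$'s reducing arcs may land anywhere (in particular, on top of $\Delta$'s fan apex). Your proposed substitute --- ``isolate a block of $\approx n/(4g)$ canonical diagonals that can be installed at a single flip apiece'' --- is asserted, not derived: you neither identify which run this is relative to the fixed $\Delta$, nor explain how it coexists with the handle-normalisation cost, nor show the one-flip-apiece claim survives the non-flippability constraints you yourself flag. As written, the argument gives $4n+O_g(1)$ but not $(4-\tfrac{1}{4g})n+O_g(1)$. The first-pass accounting and the three-stage outline are reasonable and roughly parallel the paper's first step, but the improvement step is the whole point of the theorem and it is missing.
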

 
As a direct consequence of this result and the results of \cite{ParlierPournin2014}, we obtain the following.
\begin{corollary}
Let $\Sigma_n$ be a filling surface with fixed topology and $n$ marked points on the privileged boundary. If this surface is not homeomorphic to a disk or a once punctured disk, then the diameter of its modular flip-graph grows at least as $5n/2$. 
\end{corollary}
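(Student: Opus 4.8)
The natural approach is to combine Theorem~\ref{thm:genusg} with the lower bounds of \cite{ParlierPournin2014}, splitting on the genus of the filling surface. Recall that, up to homeomorphism, a filling surface is described by its genus $g$, the number $n$ of marked points on the privileged boundary, and the rest of its topological data, namely its interior marked points and its additional boundary components. Among genus-$0$ filling surfaces, exactly two are excluded by hypothesis: the disk (no further data) and the once-punctured disk (a single interior marked point and nothing else). Hence every filling surface under consideration either has $g\ge 1$, or has $g=0$ with at least two features beyond the privileged boundary (interior marked points or extra boundary components, in any combination).

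For the positive-genus case I would observe that the given filling surface is obtained from the one-holed genus-$g$ surface of Theorem~\ref{thm:genusg} with the same $n$ by adjoining interior marked points and extra boundary components; since these operations do not decrease the diameter of the modular flip-graph — a monotonicity property that follows from the way triangulations of a filling surface sit inside those of a larger one, in the spirit of \cite{ParlierPournin2014} — Theorem~\ref{thm:genusg} yields the lower bound $\lfloor 5n/2\rfloor-2$. For the remaining genus-$0$ case I would reduce, by the same monotonicity, to the minimal such surface, namely a disk with two interior marked points, and invoke the lower bound of \cite{ParlierPournin2014}, which already shows that the modular flip-graph of a twice-punctured disk has diameter growing at least like $5n/2$. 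In both cases the diameter grows at least as $5n/2$, and since only this asymptotic rate is asserted, the bounded corrections — the $-2$ and the constant $K_g$ in Theorem~\ref{thm:genusg}, and any additive loss in the monotonicity step — are immaterial.

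The substantive external input is the genus-$0$ base case from \cite{ParlierPournin2014}: that a polygon carrying two extra interior marked points (or the appropriate minimal mixture of interior points and holes) already has modular flip-graph diameter growing at least like $5n/2$. I expect the only delicate point in assembling the corollary to be the monotonicity step — checking that it applies uniformly to all three kinds of extra topology with no multiplicative loss, the mildly awkward case being an additional unmarked boundary, which one degenerates to an interior marked point (or handles by a direct embedding of triangulations) without shortening any flip path. Should the results of \cite{ParlierPournin2014} already establish the $5n/2$ lower bound for every admissible filling surface other than the one-holed surfaces of positive genus, the monotonicity step becomes unnecessary and the corollary is immediate from that work together with Theorem~\ref{thm:genusg}.
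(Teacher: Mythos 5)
Your final paragraph is, in fact, the paper's actual argument: the results of \cite{ParlierPournin2014} already provide the $5n/2$-growth lower bound for every filling surface that has at least one interior marked point or at least one extra boundary component (these are precisely the ``admissible'' surfaces other than one-holed ones, once the disk and once-punctured disk are excluded), and Theorem~\ref{thm:genusg} supplies the one remaining family, namely one-holed surfaces of positive genus. The paper treats the corollary as the immediate union of these two inputs, with no reduction step. So your closing ``should the results of \cite{ParlierPournin2014} already establish\ldots'' is not a hypothetical --- it is the situation, and it is the whole proof.

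The monotonicity step around which you build the rest of your argument is a genuine gap. The claim that adjoining interior marked points or extra boundary components cannot decrease the diameter of the modular flip-graph is plausible but is not established anywhere in these papers, and it is not a consequence of the vertex-deletion machinery (Theorem~\ref{Atheorem.2.1}), which compares a surface to one of \emph{smaller} arc complexity and thus runs in the opposite direction. To prove monotonicity you would need to show that any two triangulations of the smaller surface admit completions to triangulations of the enlarged surface whose flip distance is no smaller; but a geodesic in the enlarged flip-graph need not pass through triangulations that restrict to the smaller surface, so a detour through the added topology could in principle be a shortcut. Ruling out such shortcuts is exactly the kind of delicate work that Theorem~\ref{Atheorem.2.1} and Lemma~\ref{Alemma.2.1} do for vertex deletion, and an analogue for surface enlargement would be a new lemma, not a remark one can wave at. Fortunately, that lemma is not needed here: the case split you would like to resolve by monotonicity is already covered outright by the cited results.
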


\section{Preliminaries}\label{sec:prelim}

We begin this section by defining and describing the objects we are interested in.

Our basic setup is as follows. Consider an orientable topological surface of finite type $\Sigma$ with a single boundary curve. It has no marked points on it  but will be endowed with them in what follows. We'll denote by $g\geq 0$ the genus of $\Sigma$ (so if $g=0$ then $\Sigma$ is a disk). 

For any positive integer $n$, from $\Sigma$ we obtain a surface $\Sigma_n$ by placing $n$ marked points on the privileged boundary of $\Sigma$. These marked points are {\it labelled} from $a_1$ to $a_n$, clockwise around the privileged boundary. We are interested in triangulating $\Sigma_n$ and studying the geometry of the resulting flip-graphs. An {\it arc} of $\Sigma_n$ is an {\it essential} isotopy class of non-oriented simple paths between two marked points (not necessarily distinct). The isotopies we consider are relative to endpoints and by essential we mean not isotopic to a single marked point. In particular the boundary curve is cut into $n$ essential arcs between subsequent marked points: we call these arcs boundary arcs and denote by $\alpha_p$ the boundary arc between $a_p$ and the next marked point clockwise around the privileged boundary. All other arcs are {\it interior} arcs but unless specifically stated, by arc we'll generally mean interior arc.

 A {\it triangulation} of $\Sigma_n$ is a maximal collection of arcs that can be realized in such a way that two of them do not cross. For fixed $\Sigma_n$, we call the number of interior arcs of a triangulation the {\it arc complexity} of $\Sigma_n$. Note that by an Euler characteristic argument, it increases linearly in $n$. Although triangulations are not necessarily ``proper" triangulations in the usual sense (triangles can share multiple arcs and arcs can be loops), they do cut the surface into a collection of triangles. This is one reason for using the terminology of {\it arc} of a triangulation instead of an {\it edge}, the other being to avoid confusion between the edges of a triangulation and the edges of the flip-graphs we'll now define.
 
The {\it flip-graph} $\FS$ is constructed as follows. Vertices of $\FS$ are the triangulations of $\Sigma_n$ and two vertices share an edge if they coincide in all but one arc. Equivalently they share an edge if they are related by a {\it flip}, as shown on Fig. \ref{fig:flip}.
\begin{figure}
\begin{centering}
\includegraphics{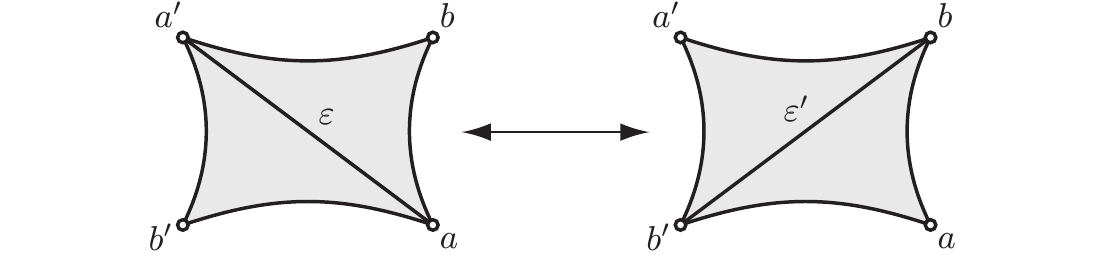}
\caption{The flip that exchanges arcs $\varepsilon$ and $\varepsilon'$}
\label{fig:flip}
\end{centering}
\end{figure}
This graph is finite if and only if $\Sigma$ is of genus $0$ but it is always connected and any isotopy class of arc can be introduced into a triangulation by a finite number of flips (see for instance \cite{Mosher1988}).

In the event that $\FS$ is infinite, there is a non-trivial natural action of the group of self-homeomorphisms of $\Sigma_n$ on $\FS$. We require that homeomorphisms fix all marked points individually. We denote $\mcg(\Sigma_n)$ the group of such homeomorphisms up to isotopy. Note that once $n\geq 3$, by the action on the boundary of $\Sigma$, all such homeomorphisms are necessarily orientation preserving. As we are interested primarily in large $n$, we do not need to worry about orientation reversing homeomorphisms. We further note that $\mcg(\Sigma_n)$ is almost the automorphism group of $\FS$ and had we allowed homeomorphisms to exchange marked points, it would have been exactly the automorphism group \cite{KorkmazPapadopoulos2012}. As we do not allow it, the automorphism group of the resulting graphs is the cyclic group on $n$ elements. 

The combinatorial moduli spaces (the modular flip-graphs) we are interested in are thus 
$$
\MFS = \FS / \mcg (\Sigma_n).
$$
As described above, these are connected finite graphs which we think of as discrete metric spaces where distance comes from assigning length $1$ to each edge. In particular, some of these graphs have loops (a single edge from a vertex to itself) but adding or removing a loop gives rise to an identical metric space on the set of vertices so we think of these graphs as not having any loops. Our main focus is on the vertex diameter of these graphs, which we denote $\diam(\MFS)$, and how these grow in function of $n$ for fixed $\Sigma$.

Our lower bounds on $\diam(\MFS)$ will be obtained using the operation of deleting a vertex from triangulations of $\Sigma_n$ already used in \cite{ParlierPournin2014,Pournin2014}. (Equivalently one can think of this operation as the contraction of a boundary arc.) Assuming that $n$ is greater than $1$, these operations induce graph homomorphisms from $\MFS$ to $\MF(\Sigma_{n-1})$. Consider a triangulation $T$ in $\MFS$ and a vertex $a_p$ of $T$.
\begin{figure}[b]
\begin{centering}
\includegraphics{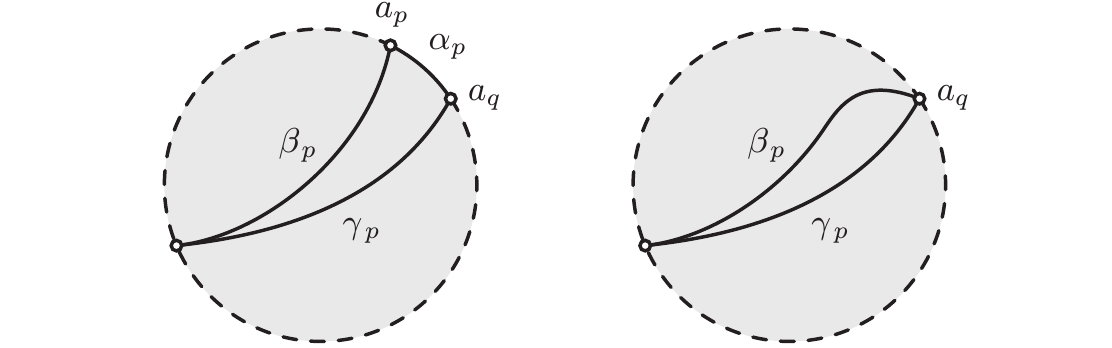}
\caption{The triangle incident to $\alpha_p$ in a triangulation of $\Sigma_n$ (left) and what remains of it when $a_p$ is slid along the boundary to the other vertex $a_q$ of $\alpha_p$ (right).}
\label{Afigure.2.2}
\end{centering}
\end{figure}
Some triangle $t$ of $T$, shown on the left of Fig. \ref{Afigure.2.2}, is incident to arc $\alpha_p$.  As no vertex of $T$ is interior to $\Sigma$ and $n\geq 2$, the two other arcs of $t$ are distinct. Let $\beta_p$ and $\gamma_p$ denote these arcs. Deleting $a_p$ from $T$ consists in displacing $a_p$ along the boundary to the other vertex $a_q$ of $\alpha_p$ within this triangulation, and by removing either $\beta_p$ or $\gamma_p$ from it. In particular, this operation removes vertex $a_p$ and arc $\alpha_p$ from the boundary of $\Sigma$. Since displacing $a_p$ to $a_q$ makes $\beta_p$ and $\gamma_p$ isotopic, as shown on the right of Fig. \ref{Afigure.2.2}, the removal of one of them results in a triangulation of $\Sigma_{n-1}$.

We now describe how modular flip-graphs are affected by vertex deletions.

First observe that the deletion of $a_p$ sends all triangles of $T$ that are not $t$ to triangles. Hence, it transforms a triangulation $T$ that belongs to $\MFS$ into a triangulation in $\MF(\Sigma_{n-1})$ (which appropriately has one less triangle). The resulting triangulation will be denoted by $T{\contract}p$ (following the notation introduced in \cite{Pournin2014}).

The way vertex deletions modify the edges of $\MFS$ is described in \cite{Pournin2014} when $\Sigma$ is a disk and in \cite{ParlierPournin2014} in the more general case of filling surfaces. For completeness, we quickly describe the process here, focussing only on what will be strictly needed in the sequel. Let us consider a path of $\MFS$ between two triangulations $U$ and $V$, i.e., a sequence $(T_i)_{0\leq{i}\leq{k}}$ of triangulations in $\MFS$ where $T_0=U$, $T_k=V$, and $T_{i-1}$ can be transformed into $T_i$ by a flip for $0<i\leq{k}$. This sequence of triangulations can be thought of as a sequence of $k$ flips that transform $U$ into $V$, and for this reason, we refer to $k$ as its length. A path between $U$ and $V$ is called a \emph{geodesic} if its length is equal to the distance between $U$ and $V$ in $\MFS$, which we denote by $d(U,V)$.

Now, call a flip between $U$ and $V$ \emph{incident to arc $\alpha_p$} when $U{\contract}p$ and $V{\contract}p$ are identical. The following theorem is borrowed from \cite{ParlierPournin2014}.

\begin{theorem}\label{Atheorem.2.1}
Let $n$ be an integer greater than $1$. Further consider two triangulations $U$ and $V$ in $\MF(\Sigma_n)$. If there exists a geodesic between $U$ and $V$ along which at least $f$ flips are incident to arc $\alpha_p$, then the following inequality holds:
$$
d(U,V)\geq{d(U{\contract}p,V{\contract}p)+f}\mbox{.}
$$
\end{theorem}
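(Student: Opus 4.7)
The plan is to pass a geodesic $U = T_0, T_1, \ldots, T_k = V$ of length $k = d(U,V)$ in $\MFS$ to its image $T_0 \contract p, T_1 \contract p, \ldots, T_k \contract p$ in $\MF(\Sigma_{n-1})$, and to argue that this image, once repetitions are suppressed, still has length at most $k - f$. Since it joins $U \contract p$ and $V \contract p$, this would give $d(U \contract p, V \contract p) \le k - f$, which rearranges to the claimed inequality.

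The first step I would carry out is the local analysis of how a single flip behaves under the operation $\contract p$. Let $T_{i-1}$ and $T_i$ differ by the flip that exchanges arcs $\varepsilon$ and $\varepsilon'$, and let $t$ be the triangle of $T_{i-1}$ incident to $\alpha_p$ with the two other arcs $\beta_p$ and $\gamma_p$. If the flip is incident to $\alpha_p$, then by definition $T_{i-1} \contract p = T_i \contract p$ and the step contributes nothing to the image sequence. Otherwise I would show that $T_{i-1} \contract p$ and $T_i \contract p$ differ by exactly one flip. This splits according to how $\{\varepsilon,\varepsilon'\}$ sits relative to $t$: when neither of $\varepsilon$, $\varepsilon'$ lies among $\beta_p$, $\gamma_p$, or the third arc of $t$, the flip descends verbatim because $\contract p$ only alters arcs of $t$; when one of $\varepsilon$, $\varepsilon'$ is such an arc, a finite case analysis, patterned on the one carried out for filling surfaces in \cite{ParlierPournin2014}, shows that after sliding $a_p$ along $\alpha_p$ the two isotopy classes $\beta_p$ and $\gamma_p$ coincide, and the remaining arcs assemble into two triangulations of $\Sigma_{n-1}$ that either agree (precisely when the original flip was incident to $\alpha_p$) or differ by a single flip.

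Granted this per-step statement, concatenation finishes the argument: the $f$ flips incident to $\alpha_p$ contribute no step to the image sequence, while each of the remaining $k - f$ flips contributes at most one flip. Thus $U \contract p$ and $V \contract p$ are joined by a walk of length at most $k - f$ in $\MF(\Sigma_{n-1})$, so $d(U \contract p, V \contract p) \le d(U,V) - f$.

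The main obstacle is the bookkeeping hidden in the definition of $\contract p$. Sliding $a_p$ to $a_q$ renders $\beta_p$ and $\gamma_p$ isotopic, and the operation has to discard exactly one of them; an ill-chosen rule could either disconnect the image sequence or force $T_{i-1} \contract p$ and $T_i \contract p$ to differ by more than one flip. The fix is to adopt the canonical convention of \cite{Pournin2014, ParlierPournin2014}, and the substantive work consists in verifying that this canonical choice is stable along a flip: in each case of the local analysis above, the two choices made before and after the flip produce compatible triangulations of $\Sigma_{n-1}$, so the induced sequence is indeed a walk of length at most $k - f$.
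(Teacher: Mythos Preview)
The paper does not prove this theorem; it explicitly borrows the statement from \cite{ParlierPournin2014} (and the underlying idea from \cite{Pournin2014}), so there is no in-paper proof to compare against. Your sketch is precisely the argument used in those references: push a geodesic in $\MFS$ through the vertex-deletion map, observe that by definition the $f$ flips incident to $\alpha_p$ collapse to equalities, and check that every other flip descends to a single flip in $\MF(\Sigma_{n-1})$, yielding a path of length at most $d(U,V)-f$ between $U\contract p$ and $V\contract p$.

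One remark on the exposition rather than the mathematics: once the paper's definition of ``incident to $\alpha_p$'' is in hand (namely $T_{i-1}\contract p = T_i\contract p$), the only nontrivial content left is that a flip which is \emph{not} incident to $\alpha_p$ produces triangulations $T_{i-1}\contract p$ and $T_i\contract p$ that differ by \emph{exactly one} flip, not more. You correctly flag this as the place where the case analysis lives, and you are right that the delicate configurations are those where the flipped arc is one of $\beta_p$, $\gamma_p$ or lies on the boundary of the triangle $t$ incident to $\alpha_p$. The convention for which of the two isotopic arcs to discard is what makes this go through, and the verification is carried out in full in \cite{ParlierPournin2014}; your outline of it is accurate.
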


This theorem allows to compare distances within $\MFS$ with distances within $\MF(\Sigma_{n-1})$. It also provides lower bounds on the former that depend on the number of flips incident to a boundary arc $\alpha_p$ along some geodesic between $U$ and $V$. It is not too difficult to find configurations such that this number is positive: for instance when the triangles incident to $\alpha_p$ in $U$ and in $V$ are distinct. In some situations, this number is greater than $1$. Consider, for instance, the triangulations $U$ and $V$ sketched in Fig. \ref{Afigure.2.3}. One can see that the two boundary arcs $\alpha_p$ and $\alpha_q$ that share vertex $a_q$ are incident to the same triangle of $U$. In this case, we say that $U$ has an \emph{ear} in vertex $a_q$. The triangles of $V$ incident to $\alpha_p$ and $\alpha_q$ are distinct and they do not share an arc.
\begin{figure}
\begin{centering}
\includegraphics{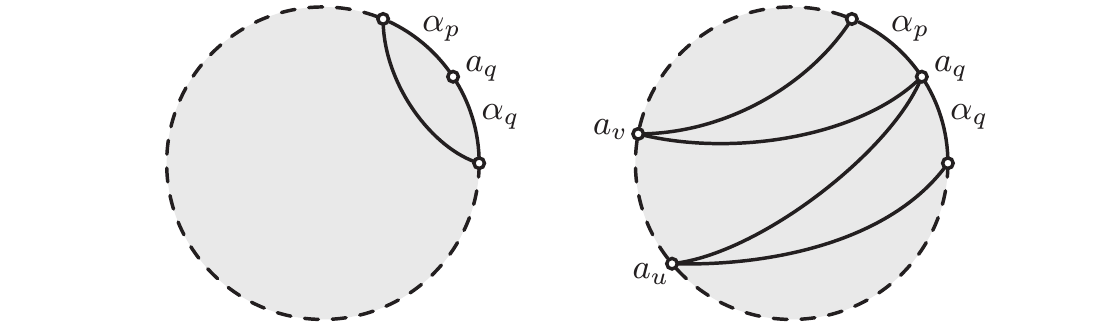}
\caption{Triangulations $U$ (left) and $V$ (right) from Lemma \ref{Alemma.2.1}.}
\label{Afigure.2.3}
\end{centering}
\end{figure}
(Note, however, that $u$ and $v$ are possibly equal.) In this situation, we have the following result, also borrowed from \cite{ParlierPournin2014}.

\begin{lemma}\label{Alemma.2.1}
Consider two triangulations $U$ and $V$ in $\MF(\Sigma_n)$. Further consider two distinct arcs $\alpha_p$ and $\alpha_q$ on the privileged boundary of $\Sigma_n$ so that $a_q$ is a vertex of $\alpha_p$. If $U$ has an ear in $a_q$ and if the triangles of $V$ incident to $\alpha_p$ and to $\alpha_q$ do not have a common arc, then for any geodesic between $U$ and $V$, there exists $r\in\{p,q\}$ so that at least two flips along this geodesic are incident to $\alpha_r$.
\end{lemma}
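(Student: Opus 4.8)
The plan is to follow a geodesic from $U$ to $V$ and keep track, step by step, of the triangle incident to the boundary arc $\alpha_p$ and of the one incident to $\alpha_q$. Saying that $U$ has an ear at $a_q$ means exactly that these two triangles coincide in $U$, while in $V$ they are disjoint (they share no arc, hence are distinct). The argument will show that these two triangles cannot both reach their respective targets using only a single ``incident'' flip each.

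The first step is a local description of the flips that are incident to a boundary arc $\alpha_r$: I claim that a flip along a path of $\MFS$ is incident to $\alpha_r$ \emph{if and only if} it changes the triangle incident to $\alpha_r$. One direction is immediate: if a flip leaves the triangle incident to $\alpha_r$ untouched, then after deleting $a_r$ it is still a genuine flip, so $T{\contract}r$ is not the same before and after. For the converse, the only arcs whose flip can change the triangle incident to $\alpha_r$ are the (one or two) arcs among its three sides that are not boundary arcs, and flipping any such arc becomes, after the deletion of $a_r$, a flip exchanging an arc with an arc to which it has become isotopic, hence with no effect on $T{\contract}r$; this is the same computation as in \cite{Pournin2014, ParlierPournin2014}, the degenerate configurations (loops, or triangles with a repeated vertex, for which the quadrilateral of the flip collapses under the deletion) being handled there as well. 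A consequence I will use repeatedly is that, along any path, the triangle incident to $\alpha_r$ stays constant between two consecutive flips incident to $\alpha_r$ and changes at each of them.

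Now fix a geodesic $(T_i)_{0\le i\le k}$ with $T_0=U$ and $T_k=V$, let $f_p$ and $f_q$ be the numbers of its flips incident to $\alpha_p$ and to $\alpha_q$, let $t$ be the triangle of $U$ incident to both $\alpha_p$ and $\alpha_q$, and let $s_p$ and $s_q$ be the triangles of $V$ incident to $\alpha_p$ and to $\alpha_q$. Since $s_p$ and $s_q$ share no arc they are distinct, and neither equals $t$ (if $s_p=t$ then $s_p$ contains $\alpha_q$, forcing $s_q=s_p$, which is excluded; similarly for $s_q$). Hence the triangle incident to $\alpha_p$ does change along the geodesic, and so does the one incident to $\alpha_q$; by the first step, $f_p\ge1$ and $f_q\ge1$. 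Assume for contradiction that $f_p=f_q=1$, and let $\phi_p$, occurring at step $i$, and $\phi_q$, occurring at step $j$, be the corresponding flips. If $i=j$, the single flip $\phi_p=\phi_q$ changes both triangles at once; just before it, both of them equal $t$, so $\phi_p$ flips the unique non-boundary side $\varepsilon$ of the ear $t$, and inspecting the quadrilateral formed by $t$ and the triangle across $\varepsilon$ shows that, right after the flip, the triangles incident to $\alpha_p$ and to $\alpha_q$ share the arc created by $\phi_p$; as no later flip of the geodesic touches $\alpha_p$ or $\alpha_q$, this means $s_p$ and $s_q$ share an arc, a contradiction. Hence $i\ne j$. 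The remaining point is that, at any $T_m$ before which no flip incident to $\alpha_p$ has occurred, the triangle incident to $\alpha_p$ is still $t$; but $t$ also contains $\alpha_q$, so the triangle incident to $\alpha_q$ at $T_m$ is $t$ as well — and symmetrically with $p$ and $q$ exchanged. If $i<j$, choosing $m$ with $i\le m<j$ gives a triangulation whose triangle incident to $\alpha_p$ is $s_p\ne t$ but whose triangle incident to $\alpha_q$ is still $t$, which contains $\alpha_p$, forcing the first triangle to be $t$ — a contradiction; the case $j<i$ is symmetric. In every case $f_p=f_q=1$ is impossible, so $f_p\ge2$ or $f_q\ge2$.

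The delicate part is the first step: making the characterisation of incident flips airtight, especially in the degenerate configurations where deleting $a_r$ collapses the quadrilateral supporting a flip, and, relatedly, verifying in the case $i=j$ that flipping the diagonal of the ear really does leave $\alpha_p$ and $\alpha_q$ in two triangles that share an arc. Once that is settled, the rest is bookkeeping along the geodesic.
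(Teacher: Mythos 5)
The paper does not actually prove this lemma; it states it with the remark that it is ``borrowed from \cite{ParlierPournin2014},'' so there is no in-paper proof to compare against. That said, your argument is correct and self-contained (modulo the degenerate-configuration details you explicitly defer to \cite{Pournin2014,ParlierPournin2014}), and it is clearly in the spirit of the machinery that both papers rely on: a flip is incident to $\alpha_r$ precisely when it changes the triangle carrying $\alpha_r$, and one then simply tracks those two triangles along the geodesic.

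Two small remarks on the write-up. First, in the ``converse'' half of your characterisation, the phrase ``a flip exchanging an arc with an arc to which it has become isotopic'' is a bit off: after deleting $a_r$, the removed arc $\beta_r$ becomes isotopic to $\gamma_r$ (the other non-boundary side of $t_r$), while the introduced arc $\beta'_r$ becomes isotopic to the side $\delta$ of the opposite triangle — so it is not $\beta_r$ and $\beta'_r$ that coincide in the quotient, but rather each gets identified with a pre-existing arc, and the two projected triangulations end up with the same arc set. The conclusion (no effect on $T\contract r$) is right; only the description of why is imprecise. Second, your observation that $s_p\neq t$ and $s_q\neq t$ — used to get $f_p,f_q\geq 1$ — and the clean split into $i=j$ versus $i\neq j$ make the contradiction transparent; in the $i<j$ case the crux is that $t$ being present as the triangle at $\alpha_q$ in $T_m$ forces it to also be the triangle at $\alpha_p$ there, contradicting $s_p\neq t$. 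That is exactly the right bookkeeping and it goes through with no gaps.
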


\section{Upper bounds}\label{sec:upper}

We begin by proving our upper bounds on the diameter of $\mathcal{MF}(\Sigma_n)$ where $\Sigma$ is a one holed torus.

\begin{theorem}
Consider a one holed surface $\Sigma$ of genus $1$ with no interior marked points. The following inequality holds:
$$
\diam(\mathcal{MF}(\Sigma_n))\leq \frac{23}{8} n + 8\mbox{.}
$$
\end{theorem}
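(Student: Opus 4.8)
The plan is to bound the diameter by exhibiting, for every triangulation $T$ of $\Sigma_n$, a short path in $\MFS$ to some fixed "reference" triangulation $T_\star$; the diameter is then at most twice the worst-case length of such a path. First I would fix a convenient normal form: since $\Sigma$ is a one holed torus, a triangulation with $n$ marked points on the boundary has arc complexity $n+2$ (by the Euler characteristic count alluded to in Section~\ref{sec:prelim}), so $T_\star$ should be a triangulation that is "mostly a fan" on the boundary polygon with a small fixed sub-triangulation carrying the genus. Concretely, pick $T_\star$ so that all but a bounded number of marked points $a_p$ are \emph{ears} (both boundary arcs $\alpha_{p-1}$ and $\alpha_p$ incident to one triangle), with the remaining marked points and the two extra arcs accounting for the handle of the torus confined to a region of bounded size.

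The main work is the reduction step: starting from an arbitrary $T$, I would show that with a bounded number of flips one can create an ear at some marked point $a_p$, and that thereafter the vertex-deletion operation $T\contract p$ can be mimicked inside $\MFS$ — i.e. the sequence of flips realizing a short path in $\MF(\Sigma_{n-1})$ lifts to a path in $\MFS$ of the same length plus the bounded "ear-creation" cost. Iterating, one transports $T$ down to the case of few marked points, where the diameter is a constant. The key quantitative point is the amortized cost per marked point: creating an ear at a well-chosen vertex should cost on average $\tfrac{23}{8}$ flips rather than a naive $4$ or so. This is where the genus-$1$ combinatorics must be exploited carefully — one studies the few possible local pictures of the triangulation around a boundary arc (how many of the arcs at $a_p$ are loops, whether the two non-boundary arcs of the triangle on $\alpha_p$ meet the handle region, etc.), and one shows that over a block of several consecutive boundary vertices the total ear-creating cost is at most $\tfrac{23}{8}$ times the block length, plus lower-order terms absorbed into the additive constant $8$. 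A parity/averaging argument over pairs of adjacent vertices, together with the observation that an ear once created can be "pushed" cheaply, is the natural way to get the denominator $8$.

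I would organize the argument as follows. (1) Compute the arc complexity and fix $T_\star$ explicitly, checking $\diam$ restricted to bounded $n$ is a constant. (2) Prove an "ear-creation lemma": from any $T$, a specified $a_p$ can be made into an ear using at most a fixed number of flips, with the bound improving when neighbouring structure is favourable. (3) Prove a "lift lemma": once $a_p$ is an ear, a geodesic in $\MF(\Sigma_{n-1})$ from $T\contract p$ to $T_\star\contract p$ lifts to a path in $\MFS$ from $T$ to $T_\star$ of equal length (the ear acts as a rigid triangle that the flips below never touch). (4) Assemble: handle the boundary vertices in blocks so that the averaged cost per vertex is $\tfrac{23}{8}$, run the induction down to bounded $n$, and double to get the diameter bound $\tfrac{23}{8}n+8$.

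The hard part will be step (2) in its sharp form: getting the constant down to $\tfrac{23}{8}$ rather than something like $4$ requires a genuinely case-based analysis of the local configurations a triangulation of the one holed torus can present near a boundary arc, and a clever grouping of vertices so that expensive local moves are paid for by cheap neighbours. Steps (1), (3), and the assembly are routine once the ear-creation estimate is in hand; the additive constant $8$ is simply whatever slack is needed to start the induction and to absorb the bounded handle region, so I would not track it precisely until the end.
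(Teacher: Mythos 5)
Your proposal is a plan, not a proof: the crucial quantitative step (2), which you yourself flag as ``the hard part,'' is exactly where the constant $23/8$ would have to come from, and you give no argument for it. Moreover, the architecture of your plan has a structural weakness that makes $23/8$ hard to reach by this route. You fix a single reference triangulation $T_\star$ and want to bound the \emph{radius} (distance of any $T$ to $T_\star$), then double. But the paper's $23/8$ is obtained by a path-from-$T$-to-$T'$ argument in which a vertex $a_0$ is chosen \emph{depending on both endpoints}: after cutting each of $T$ and $T'$ along two non-boundary-parallel arcs, the eight endpoints of those four arcs leave a gap of $\lceil n/8\rceil$ consecutive marked points free, and $a_0$ is placed in that gap. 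Both triangulations are then fanned to $a_0$ (cost $\approx 2n$), and the four ``hands'' from the handle loops are slid around the remaining $\approx 7n/8$ boundary vertices to match (cost $\approx 7n/8$), giving $23n/8 + O(1)$. If $T_\star$ is fixed in advance you cannot place $a_0$ this way, so the $1/8$ savings disappears; a fan-and-match to a fixed center followed by doubling naively gives something closer to $4n$, not $23n/8$.

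There is also a gap in your ``lift lemma'' (step 3). Once $a_p$ is an ear in $T$, a geodesic from $T\contract p$ to $T_\star\contract p$ in $\MF(\Sigma_{n-1})$ lifts to a path starting at $T$, but it ends at the triangulation obtained by gluing the ear back onto $T_\star\contract p$, which need not be $T_\star$ unless $T_\star$ already has an ear at $a_p$; since $p$ varies as you iterate, $T_\star$ cannot have ears everywhere you need them. Finally, the amortization claim (``cost on average $23/8$ over a block'') is asserted, not derived; without the case analysis you defer, there is no reason to believe the optimal averaged ear-creation cost on a one-holed torus is $23/8$ rather than $3$ or $4$. In short, the approach is genuinely different from the paper's direct fan-to-a-shared-apex construction, but as written it does not establish the stated bound and its central estimate is missing.
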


\begin{proof}
Let $T$ be a triangulation in $\MF(\Sigma_n)$. Call an arc of $T$ {\it parallel to the  boundary of $\Sigma$} if it is isotopic to an arc that lies within this boundary. Observe that $T$ contains arcs that are not parallel to the boundary, otherwise $\Sigma$ would be a planar surface.

Let $\beta$ be an arc that is not parallel to the boundary of $\Sigma$ and let us consider $\Sigma_n\setminus\beta$. The resulting surface if of genus $0$ with two boundaries, both of which contain a copy of $\beta$ (see Fig. \ref{Afigure.3.1}). 
\begin{figure}
\begin{centering}
\includegraphics{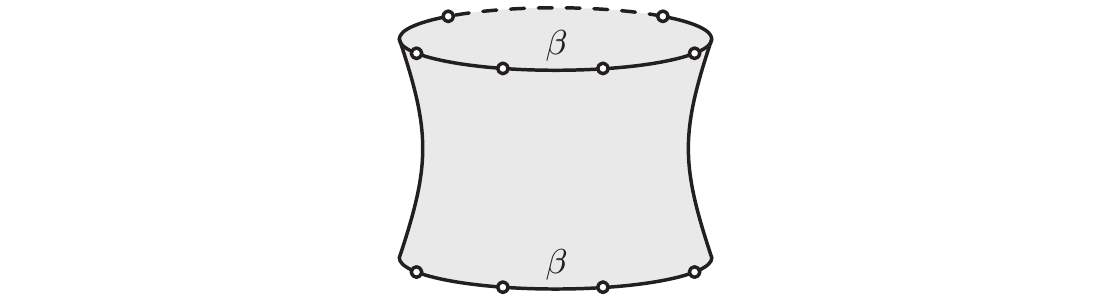}
\caption{Surface $\Sigma_n\setminus\beta$.}\label{Afigure.3.1}
\end{centering}
\end{figure}

The triangulation of $\Sigma_n\setminus\beta$ induced by $T$ must contain an arc between the two boundaries. We choose one and call it $\gamma$. Note that the corresponding arc of $T$ is non-parallel to the boundary of $\Sigma$. Further observe that, by construction, $\beta$ and $\gamma$ are non-homotopic relative to boundary. The surface obtained by cutting $\Sigma_n$ along arcs $\beta$ and $\gamma$ is a polygon, denoted by $P$ in the following of the proof. As the cuts along $\beta$ and $\gamma$ each produce two copies of these arcs, $P$ has $n+4$ vertices.

Consider the $n$ marked points on the boundary of $\Sigma_n$ with their clockwise cyclic order. The endpoints of $\beta$ split the marked points into two sets (we consider the endpoints as being part of both). As $\beta$ and $\gamma$ are non-homotopic and non-parallel to boundary, the endpoints of $\gamma$ are not both contained in exactly one of these sets. Note that several or even all of the endpoints of $\beta$ and $\gamma$ could be the same marked point in the boundary of $\Sigma_n$. The fact that together, the two pairs of endpoints split the boundary vertices into four (possibly empty) sets will be key in the argument. 

Now we consider another triangulation $T'$ in $\MF(\Sigma_n)$ and two arcs $\beta'$ and $\gamma'$ of $T'$, exhibited as $\beta$ and $\gamma$ were in the case of $T$. We denote $P'$ the polygon obtained by cutting $\Sigma_n$ along $\beta'$ and $\gamma'$. There is a sequence of $\lceil{n/8}\rceil$ consecutive marked points on the boundary of $\Sigma_n$ that does not contain any endpoints of $ \beta$, $\gamma$, $\beta'$, or $\gamma'$ except possibly for the first and last vertices in the sequence. The worst case scenario is when the 8 endpoints are evenly spaced around the boundary. We fix a boundary vertex $a_0$ lying in this sequence. 

Let us flip the arcs of the triangulation of $P$ (resp. of $P'$) induced by $T$ (resp. by $T'$) in order to increase the degree of $a_0$ as much as possible. This results in ``fan" like triangulations of $P$ and $P'$ whose interior arcs are all incident to $a_0$. Note that the flips can be chosen so that they each increase the degree of $a_0$ by one. As both polygons have $n+4$ vertices, at most $n+1$ flips are required to do so for each triangulation, and at most $2n + 2$ flips for both of them.

We now flip arcs $\beta$, $\gamma$, $\beta'$, and $\gamma'$ (two of them within each triangulation) to obtain two triangulations in $\MF(\Sigma_n)$ which we denote by $U$ and $U'$. As a result of these flips, we have a pair of loops twice incident to $a_0$ in each of the triangulations $U$ and $U'$. Up to homeomorphism, the pairs of loops, say $(\delta,\epsilon)$ in the first triangulation and $(\delta',\epsilon')$ in the second are the same. We will not flip these arcs anymore. Instead, we will flip the other arcs until the two triangulations coincide; one way of thinking of this is by looking at the triangulations of $\Sigma_n\setminus\{\delta,\epsilon\}$ and $\Sigma_n\setminus\{\delta',\epsilon'\}$ respectively induced by $U$ and $U'$. As polygons $\Sigma_n\setminus\{\delta,\epsilon\}$ and $\Sigma_n\setminus\{\delta',\epsilon'\}$ are identical up to homeomorphism, we can flip arcs of these triangulations until they coincide.

Note that each of the loops that have been created within $U$ and $U'$ has two copies on the boundary of respectively $\Sigma_n\setminus\{\delta,\epsilon\}$ and $\Sigma_n\setminus\{\delta',\epsilon'\}$. Each of these copies is incident to a triangle which we refer to as a \emph{hand}, like the hands of a clock that revolve around vertex $a_0$ (see Fig. \ref{Afigure.3.2}).
\begin{figure}
\begin{centering}
\includegraphics{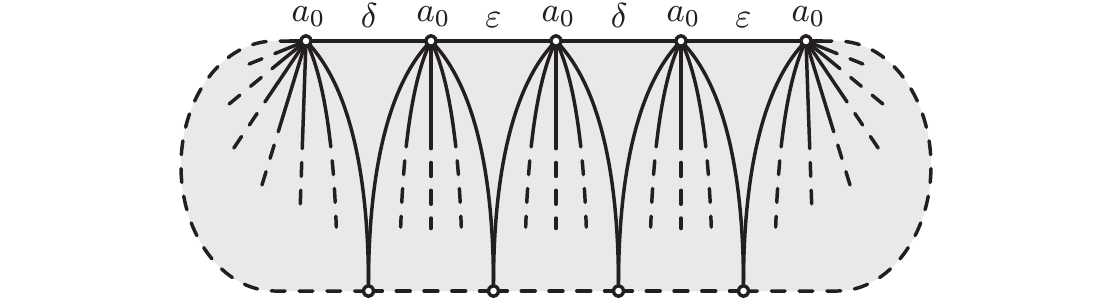}
\caption{The triangulation of $\Sigma_n\setminus\{\delta,\epsilon\}$ induced by $U$.}\label{Afigure.3.2}
\end{centering}
\end{figure}
The unique vertex of each hand that is not a copy of $a_0$ will be referred to as its {\it outer} vertex. Our goal is that both triangulations ``tell the same time": when they do they will be identical up to homeomorphism. 

By construction, the vertex $a_0$ belong to a sequence of at least $\lceil{n/8}\rceil$ consecutive vertices incident to identical arcs in $U$ and in $U'$. We now label clockwise the vertices of polygons $\Sigma_n\setminus\{\delta,\epsilon\}$ and $\Sigma_n\setminus\{\delta',\epsilon'\}$ that are not copies of $a_0$ from $a_1$ to $a_{n-1}$. Consider the smallest $k$ so that $a_k$ is the outer vertex of some hand in either $U$ or $U'$. If $a_k$ is not the outer vertex of a hand in both triangulations, we can move the outer vertex of this hand clockwise using the flip shown on the left of Fig. \ref{Afigure.3.3}. 
\begin{figure}
\begin{centering}
\includegraphics{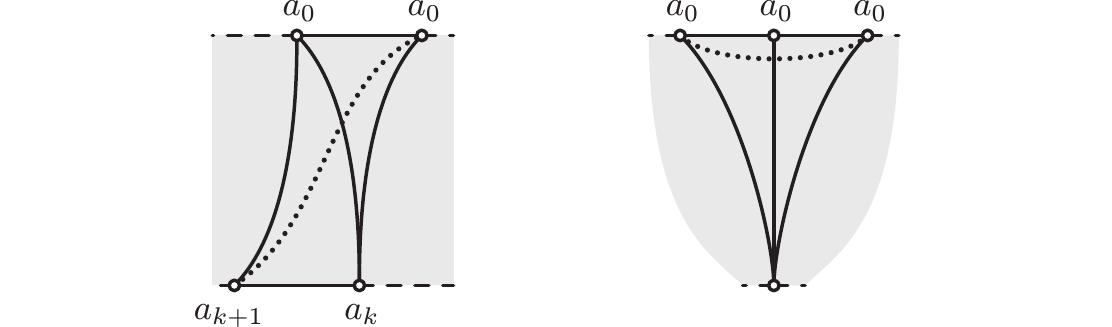}
\caption{A flip that moves the outer vertex of a hand by one vertex clockwise in either $U$ or $U'$ (left) and a flip that binds two hands (right). In both cases, the introduced arc is dotted.}\label{Afigure.3.3}
\end{centering}
\end{figure}

We move the hand until either it coincides with a hand in the other triangulation or it is adjacent to another hand in the same triangulation. In the first case we leave both hands as they are; leaving from their common outer vertex, we search clockwise for the next outer vertex on either triangulation and repeat the process. If two hands are adjacent, we flip the arc they share as shown on the right of Fig. \ref{Afigure.3.3}. This flip introduces an arc twice incident to $a_0$ and allows us to move both hands together. We do so until we either reach the outer vertex of a hand of the other triangulation or if again we meet a third hand on the same triangulation. In the first case we flip the arc twice incident to $a_0$ to unbind the hands and continue with the other hand. In the second case, we introduce an additional arc twice incident to $a_0$ by a flip, in order to move all three hands together. Again we can repeat the process described above until all four hands have the same outer vertices and the triangulations are the same. 

We now count the number of flips that have been necessary to do the whole transformation. We moved the hands around a portion of the boundary spanned by at most $n - \lceil{n/8}\rceil$ vertices. In addition, we may have joined or separated hands several times; because we have 4 hands in total, we may have had to perform at most 6 binding or unbinding flips. In total the number of flips did not exceed 
$$
\frac{7}{8} n + 6\mbox{.}
$$
Therefore, together with the first step, we needed no more than
$$
\frac{23}{8} n + 8
$$
flips to transform $T$ into $T'$, as claimed.
\end{proof}

Using the same ideas, one can prove upper bounds for higher genus $\Sigma$. According to these, the growth rate of $\diam(\mathcal{MF}(\Sigma_n))/n$ is less than $4$ for any genus, improving in this  case on the general upper bound of $4$ on the growth rate, shown in \cite{ParlierPournin2014} for any filling surface. The methods are almost identical to the above proof, so we only provide a sketch of the proof noting the differences along the way.

\begin{theorem}
Let  $\Sigma$ be a one holed surface of genus $g\geq2$ with no interior marked points. The following inequality holds:
$$
\diam(\mathcal{MF}(\Sigma_n))\leq \left(4 - \frac{1}{4g}\right)  n + K_g\mbox{,}
$$
where $K_g$ is a constant that only depends on $g$. 
\end{theorem}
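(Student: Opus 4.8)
The plan is to rerun the proof of the one-holed torus case essentially verbatim, with the two cutting arcs replaced by a system of $2g$ of them. Given two triangulations $T$ and $T'$ of $\Sigma_n$, I would first extract from $T$ a family of $2g$ arcs whose complement in $\Sigma_n$ is a polygon, by iterating the construction used for the torus: pick an arc $\beta_1$ of $T$ that is not parallel to the boundary of $\Sigma$ and cut along it, obtaining a one-holed surface of genus $g-1$ with two boundary curves; pick an arc $\gamma_1$ of the induced triangulation joining the two boundary curves and cut along it too, obtaining a one-holed surface of genus $g-1$ with $n+4$ marked points on its boundary; and repeat $g$ times. This produces a polygon $P$ with $n+4g$ vertices, two copies of each of the arcs $\beta_1,\gamma_1,\dots,\beta_g,\gamma_g$ lying on its boundary. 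Doing the same with $T'$ gives arcs $\beta'_1,\gamma'_1,\dots,\beta'_g,\gamma'_g$ and a polygon $P'$, again with $n+4g$ vertices. These $4g$ cutting arcs have at most $8g$ endpoints among the $n$ marked points of $\Sigma_n$, so there is a run of at least $\lceil n/(8g)\rceil$ consecutive marked points free of them except possibly at its two ends; I would fix a vertex $a_0$ in this run.

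Next I would fan-triangulate $P$ at $a_0$ and $P'$ at $a_0$, which takes at most $n+4g-3$ flips apiece; after regluing, the corresponding triangulations of $\Sigma_n$ each consist of a fan at $a_0$ together with the $2g$ cutting arcs, and, exactly as in the torus proof, each cutting arc can be flipped, in a number of flips bounded in terms of $g$ only, into a loop based at $a_0$. Call $U$ and $U'$ the resulting triangulations and $\delta_1,\dots,\delta_{2g}$, respectively $\delta'_1,\dots,\delta'_{2g}$, their $2g$ loops. Since $\beta_i,\gamma_i$ (and $\beta'_i,\gamma'_i$) were produced by the same iterative recipe, the two $2g$-tuples of loops agree up to a homeomorphism fixing the marked points, so cutting $\Sigma_n$ along them yields in each case the same polygon with $n+4g$ vertices; each loop has two copies on its boundary, each incident to a triangle, a \emph{hand}, for a total of $4g$ hands in $U$ and $4g$ in $U'$. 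Because $U$ and $U'$ already carry the same arcs on the $\lceil n/(8g)\rceil$-run containing $a_0$, what remains is to drive the $4g$ hands of $U$, and independently those of $U'$, to one prescribed canonical configuration, after which the two triangulations coincide.

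This hand-driving step is where the genus enters the coefficient, and it is the main obstacle. For the torus there are only four hands and a single possible binding pattern, and a careful case analysis shows $7n/8+O(1)$ flips suffice; with $4g$ hands revolving past $2g$ loops one must still argue that the hands can be shepherded to the canonical configuration, that no hand travels more than essentially once over the $n-\lceil n/(8g)\rceil$ vertices outside the safe run, and that only an $O(g)$ number of binding and unbinding flips occur. Rather than carry out this analysis sharply for every $g$, one settles for the crude bound: at most $n+4g-3$ flips to fan-triangulate and at most another $n-\lceil n/(8g)\rceil+O(g)$ flips to bring the $4g$ hands and $2g$ loops of a single triangulation into canonical position, hence at most $2n-\lceil n/(8g)\rceil+O(g)$ flips to turn $T$ into the canonical triangulation, and likewise for $T'$. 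Summing gives at most $4n-\lceil n/(4g)\rceil+O(g)$ flips between $T$ and $T'$, which is the asserted bound $(4-\frac{1}{4g})n+K_g$, the constant $K_g$ absorbing the fan-size terms, the flips that turn cutting arcs into loops, and the binding/unbinding flips, all of which depend on $g$ only. The sole genuinely new ingredient compared with the torus proof is the combinatorial control of $4g$ hands and $2g$ loops; the rest is a transcription.
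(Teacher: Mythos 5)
Your overall strategy is the one the paper uses: cut along $2g$ arcs to get a polygon with $n+4g$ vertices, choose $a_0$ in a run of $\lceil n/(8g)\rceil$ untouched vertices, fan-triangulate at $a_0$ ($\leq n+4g-3$ flips per triangulation), flip each cutting arc into a loop at $a_0$ ($2g$ flips per triangulation), and then herd the $4g$ hands around the remaining $n - \lceil n/(8g)\rceil$ vertices. The arithmetic matches the paper's $(4-\frac{1}{4g})n + K_g$.

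There is, however, one genuine gap, and it sits precisely at the point you flag as ``the main obstacle'' before setting it aside. You assert that because $\beta_i,\gamma_i$ and $\beta'_i,\gamma'_i$ come from the same iterative recipe, ``the two $2g$-tuples of loops agree up to a homeomorphism fixing the marked points.'' This is \emph{false} for $g\geq 2$, and the paper explicitly warns about it: a $4g$-gon can be glued along its boundary arcs in several topologically distinct ways to yield a genus-$g$ surface, so the two loop systems in $U$ and $U'$ need not be related by any homeomorphism of $\Sigma_n$. (For the torus the configuration is unique, which is why the argument there is cleaner.) As a consequence, even after you drive every hand of $U$ and every hand of $U'$ to the same outer vertex, the two triangulations still need not coincide: they coincide in the fan part, but they may differ inside the small subsurface of genus $g$ containing $a_0$, bounded by the arcs incident to the first untouched vertex, and carrying only one marked point. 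The paper closes this gap by noting that this subsurface is a copy of $\Sigma_1$ whose modular flip-graph has a finite diameter $D_g$ depending only on $g$ (quoting \cite{DisarloParlier2014}), so at most $D_g$ extra flips suffice; this $D_g$ is then absorbed into $K_g$. Your ``prescribed canonical configuration'' language does not by itself supply that argument, and as written the claim that ``the two triangulations coincide'' after hand-driving rests on the incorrect homeomorphism assertion. Adding the observation that the residual discrepancy lives in a $\Sigma_1$ of genus $g$, and invoking the finiteness of $\diam(\MF(\Sigma_1))$, would complete your proof.
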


\begin{proof}
Within any triangulation $T$ in $\MF(\Sigma_n)$, there are $2g$ arcs $\beta_1,\hdots,\beta_{2g}$ such that $\Sigma_n\setminus\{\beta_1,\hdots,\beta_{2g}\}$ is a polygon. Now given two triangulations, $T$ and $T'$, we choose such sets of arcs. Cutting along these arcs gives us two triangulated polygons $P$ and $P'$. We can find a sequence of $n/(8g)$ consecutive vertices on the boundary of $\Sigma_n$ that do not contain any of the endpoints of the $2g$ arcs on either triangulation. We will call this sequence of vertices the {\it untouched} vertices. We choose a vertex $a_0$ in that sequence, and we flip the interior arcs of the triangulations of polygons $P$ and $P'$ induced by $T$ and $T'$ in order to increase the degree of $a_0$ as much as possible. We further flip the $2g$ arcs along which the cut was performed within each triangulation to get $2g$ loops twice incident to $a_0$ which generate the topology of $\Sigma_n$. The total number of flips that have been performed so far is at most 
$$
2(n + 4g - 3 + 2g)= 2n +12g -6\mbox{.}
$$

We can now cut $\Sigma_n$ along the $2g$ loops within each triangulation to get two new polygons with $n+4g$ vertices. Each polygon has $4g$ particular boundary arcs obtained as copies of the loops and, therefore, whose endpoints are copies of $a_0$. The remaining $n$ boundary arcs of the polygons are the boundary arcs of $\Sigma_n$. 

Again we have a notion of hands (this time we have $4g$ of them) corresponding to triangles incident to the particular arcs of the polygons. Here we note an important difference with what happened for the torus: in the case of the torus, there was only one type of topological configuration of the loops in $\Sigma_n$ and there was a self homeomorphism of $\Sigma_n$ between the sets of $2g$ loops. In general however, this fails to be the case (for instance, one can think of the different ways an octagon can be glued along its boundary arcs in order to get a surface of genus $2$). So instead of just requiring that the hands have the same outer vertices, we will perform flips until they all have the same outer vertex. We leave from the last untouched vertex clockwise and, using flips, we move the hands clockwise to the first untouched vertex. Within each triangulation, this requires 
$$
n\left(1 - \frac{1}{8g}\right)
$$
moving flips and in addition $4g -1$ binding flips to put the hands together. The two triangulations now only differ on a subsurface of genus $g$ with one boundary and a single marked point on it. They can be made identical by flipping inside this subsurface; this requires at most $D_g$ flips where $D_g$ is the flip graph diameter of this subsurface (and can be made explicit, see \cite{DisarloParlier2014}).

The total number of flips performed is
$$
2n +12g -6 + 2n \left( 1 - \frac{1}{8g}\right) + 4g -1 + D_g\mbox{,}
$$
which is precisely equal to
$$
\left(4 -  \frac{1}{4g}\right) n + K_g\mbox{,}
$$
where $K_g= 16 g - 7 + D_g$. This is the result claimed. 
\end{proof}
\section{Lower bounds}\label{Asection.4}

Let $\Sigma$ be a one holed surface with strictly positive genus $g$ and no interior marked points. Using the techniques from \cite{ParlierPournin2014,Pournin2014}, we prove in this section that
$$
\diam(\mathcal{MF}(\Sigma_n))\geq\left\lfloor{\frac{5}{2}n}\right\rfloor-2\mbox{.}
$$

As in \cite{ParlierPournin2014,Pournin2014} we exhibit two triangulations $A_n^-$ and $A_n^+$ in $\MF(\Sigma_n)$ whose distance is at least the desired lower bound. These triangulations are built from the triangulation $Z_n$ of $\Delta_n$ depicted in Fig. \ref{Afigure.4.1}, where $\Delta_n$ is a disk with $n$ marked vertices on the boundary. The interior arcs of $Z_n$ form a zigzag, i.e., a simple path that alternates between left and right turns.
\begin{figure}
\begin{centering}
\includegraphics{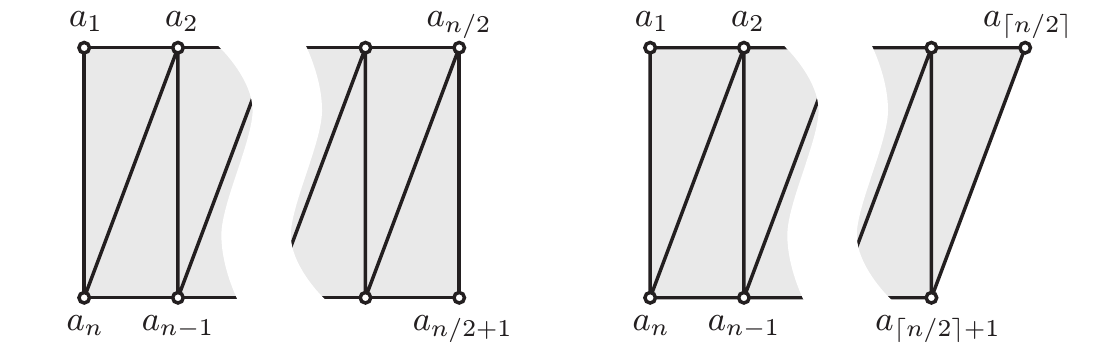}
\caption{The triangulation $Z_n$ of $\Delta_n$ depicted when $n$ is even (left) and odd (right).}\label{Afigure.4.1}
\end{centering}
\end{figure}
This path starts at vertex $a_n$, ends at vertex $a_{n/2}$ when $n$ is even, and at vertex $a_{\lceil{n/2\rceil+1}}$ when $n$ is odd. When $n$ is greater than $3$, triangulation $Z_n$ has an ear in $a_1$ and another ear in $a_{\lfloor{n/2}\rfloor+1}$. When $n$ is equal to $3$, this triangulation is made up of a single triangle which is an ear in all three vertices. Observe that $Z_n$ cannot be defined when $n$ is less than $3$.

Assume that $n\geq3$. A triangulation $A_n^-$ of $\Sigma_n$ can be built by re-triangulating the ear of $Z_n$ in $a_1$ as follows: first place an arc with vertices $a_1$ and $a_n$ within the ear. This arc is isotopic to $\alpha_n$. Next, place a loop $\gamma$ twice incident to $a_1$ between the two isotopic arcs, remove the disk enclosed by this loop, and replace it by a triangulation $A_1^-$ of $\Sigma_1$. The resulting triangulation of $\Sigma_n$ is shown at the top of Fig. \ref{Afigure.4.2}. In this representation, the triangle $t$ of $A_1^-$ incident to $\gamma$ is depicted, but the rest of $A_1^-$ is omitted, as indicated by the hatched surface. Observe that, since $a_1$ is the only vertex of $A_1^-$, then $t$ must be bounded by $\gamma$ and two other loops twice incident to $a_1$ as shown on the figure.

Another triangulation $A_n^+$ of $\Gamma_n$ can be built by re-triangulating the ear of $Z_n$ in $a_{\lfloor{n/2}\rfloor+1}$ in the same way, except that $\gamma$ is chosen incident to $a_{\lfloor{n/2}\rfloor+1}$ instead of $a_1$. Also, a triangulation $A_1^+$ of $\Sigma_1$, possibly distinct from $A_1^-$, replaces the disk enclosed by $\gamma$. Triangulation $A_n^+$ is shown in the bottom of Fig. \ref{Afigure.4.2}.

\begin{figure}
\begin{centering}
\includegraphics{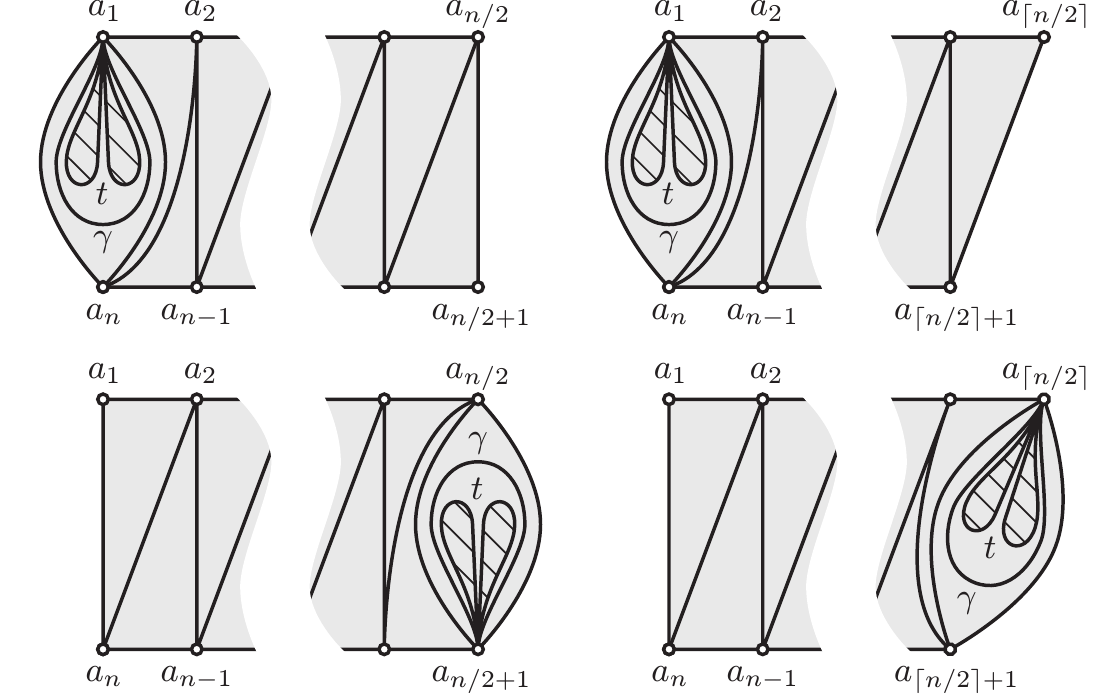}
\caption{The triangulations $A_n^-$ (top row) and $A_n^+$ (bottom row) of $\Sigma_n$ depicted when $n$ is even (left) and odd (right). For simplicity, the triangulations of $\Sigma_1$ used to build $A_n^-$ and $A_n^+$ are mostly omitted here (hatched surface), except for two interior loops twice incident to $a_1$ (top) and to $a_{\lfloor{n/2}\rfloor+1}$ (bottom).}\label{Afigure.4.2}
\end{centering}
\end{figure}

In this construction, $A_1^-$ and $A_1^+$ are arbitrary triangulations in $\MF(\Sigma_1)$. We can also define $A_2^-$ and $A_2^+$ as the triangulations in $\MF(\Sigma_2)$ that contain a loop arc at respectively vertex $a_1$ and vertex $a_2$, in such a way that these arcs respectively enclose triangulations $A_1^-$ and $A_1^+$.

In order to obtain the desired lower bound, one of the main steps will be to show the following inequality for every integer $n$ greater than $2$:
\begin{equation}\label{Aequation.4.1}
d(A_n^-,A_n^+)\geq\min(\{d(A_{n-1}^-,A_{n-1}^+)+3,d(A_{n-2}^-,A_{n-2}^+)+5\})\mbox{.}
\end{equation}

This inequality will be obtained using well chosen vertex deletions. Observe that for $n\geq2$, deleting vertex $a_n$ from both $A_n^-$ and $A_n^+$ results in triangulations isomorphic to $A_{n-1}^-$ and $A_{n-1}^+$. More precisely, once $a_n$ has been deleted, the other vertices need be relabeled in order to obtain $A_{n-1}^-$ and $A_{n-1}^+$. If the deleted vertex is $a_j$, the natural way to do so is to relabel by $a_{i-1}$ every vertex $a_i$ so that $i>j$.

This relabeling provides a map to the triangulations of $\Gamma_{n-1}$. In the following, we call any such map a {\it vertex relabeling}. The above observation can be reformulated as follows: $A_{n}^-\contract{n}$, resp. $A_{n}^+\contract{n}$ are isomorphic to $A_{n-1}^-$, resp. $A_{n-1}^+$ via the same vertex relabeling. This can be checked using Fig. \ref{Afigure.4.2}.

It then follows from Theorem \ref{Atheorem.2.1} that, if there exists a geodesic between $A_n^-$ and $A_n^+$ with at least $3$ flips incident to $\alpha_n$, then
\begin{equation}\label{Aequation.4.2}
d(A_n^-,A_n^+)\geq{d(A_{n-1}^-,A_{n-1}^+)+3}\mbox{,}
\end{equation}
and inequality (\ref{Aequation.4.1}) holds in this case. Now assume that $n\geq3$ and observe that for any integer $i$ so that $1\leq{i}<n$ and any $j\in\{n-i,n-i+1\}$, deleting vertices $a_i$ and $a_j$ from $A_n^-$ and from $A_n^+$ results in triangulations of $\Gamma_n$ isomorphic to $A_{n-2}^-$ and $A_{n-2}^+$ respectively by the same vertex relabeling. Hence, if there exists a geodesic path between $A_n^-$ and $A_n^+$ with at least $3$ flips incident to $\alpha_i$, and a geodesic path between $A_n^-\contract{i}$ and $A_n^+\contract{i}$ with at least $2$ flips incident to $\alpha_j$ then, invoking Theorem \ref{Atheorem.2.1} twice yields
\begin{equation}\label{Aequation.4.3}
d(A_n^-,A_n^+)\geq{d(A_{n-2}^-,A_{n-2}^+)+5}\mbox{,}
\end{equation}
and (\ref{Aequation.4.1}) also holds in this case. Observe that (\ref{Aequation.4.2}) and (\ref{Aequation.4.3}) follow from the existence of particular geodesic paths. The rest of the section is devoted to proving the existence of geodesic paths that imply at least one of these inequalities.

Observe that $\alpha_n$ is not incident to the same triangle in $A_n^-$ and in $A_n^+$. Therefore, at least one flip is incident to $\alpha_n$ along any geodesic from $A_n^-$ to $A_n^+$. We will study these geodesics depending on the arc introduced by their first flip incident to $\alpha_n$; this is the purpose of the next three lemmas.

\begin{lemma}\label{Alemma.4.1}
For some integer $n$ greater than $2$, consider a geodesic from $A_n^-$ to $A_n^+$. If the first flip incident to arc $\alpha_n$ along this geodesic introduces an arc with vertices $a_1$ and $a_n$, then there must be at least two other flips incident to $\alpha_n$ along it.
\end{lemma}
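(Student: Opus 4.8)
The plan is to follow the unique triangle incident to $\alpha_n$ along the geodesic, exploiting that in $A_n^-$ the handle of $\Sigma$ is packed immediately behind $\alpha_n$ while in $A_n^+$ it is not. In $A_n^-$ that triangle is $\{\alpha_n,\eta,\gamma\}$, where $\eta$ is the interior arc with vertices $a_1,a_n$ isotopic to $\alpha_n$ and $\gamma$ is the loop based at $a_1$ enclosing the copy of $\Sigma_1$; since $g\ge 1$ this loop is essential. In $A_n^+$ that triangle is the ear $\{\alpha_n,\alpha_1,\delta\}$ at $a_1$, with $\delta$ the interior arc with vertices $a_n,a_2$; being the single interior side of one triangle, $\delta$ is isotopic rel endpoints to the boundary path $\alpha_n\cup\alpha_1$, so nothing essential lies immediately behind $\alpha_n$. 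I would also record that the triangle incident to $\alpha_n$ can become this ear only through a flip incident to $\alpha_n$: changing that triangle amounts to flipping one of its two interior sides, and to produce the ear this way the new arc must be $\delta$ (it cannot be the boundary arc $\alpha_1$), and $\delta$ meets $a_n$, which makes the flip incident to $\alpha_n$.

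Next I would examine the first flip $F$ incident to $\alpha_n$. Just before $F$ the triangle incident to $\alpha_n$ still has the handle of $\Sigma$ wrapped immediately behind it, since a flip that is not incident to $\alpha_n$ only changes the triangulation inside the union of the two triangles adjacent to the flipped arc -- a disk -- and hence cannot uncover the handle from behind $\alpha_n$. By hypothesis $F$ replaces one of the two interior sides of this triangle by an interior arc $\zeta$ with vertices $a_1,a_n$; with the standard quadrilateral description of a flip, one checks that the triangle incident to $\alpha_n$ in the triangulation $W$ reached just after $F$ has the form $\{\alpha_n,\zeta,\mu\}$ and that the handle is still wrapped by $\zeta$ and $\mu$ immediately behind $\alpha_n$. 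In particular $\zeta$ is not isotopic to $\alpha_n$, as otherwise $\zeta$ would cobound a disk with $\alpha_n$ containing the handle of $\Sigma$.

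To finish I would show that the triangle incident to $\alpha_n$ cannot become the ear $\{\alpha_n,\alpha_1,\delta\}$ until at least two more flips incident to $\alpha_n$ have occurred, giving at least three in all. A single flip incident to $\alpha_n$ removes only one of the two interior sides of the current triangle, so starting from $\{\alpha_n,\zeta,\mu\}$ it cannot dispose of $\zeta$ and $\mu$ at once; a short case analysis on which of the two is flipped -- using once more that flips act inside a disk and cannot uncover the handle -- shows that the triangle incident to $\alpha_n$ obtained right after this next flip still has the handle of $\Sigma$ immediately behind it, hence is still not the ear. A third flip incident to $\alpha_n$ is therefore needed, and together with $F$ and the intermediate flip this gives at least two flips incident to $\alpha_n$ besides $F$, as claimed.

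I expect the crux to be turning ``the handle of $\Sigma$ lies immediately behind $\alpha_n$'' into a robust invariant of the triangulation: one must pin down which isotopy classes of arcs and loops can occur as the two interior sides of the triangle incident to $\alpha_n$ during the process, verify that this is preserved by every flip not incident to $\alpha_n$ and degrades by at most one ``step'' under a flip incident to $\alpha_n$, and check that $A_n^-$ is two such steps away from the ear whereas $A_n^+$ realizes it. Because $\Sigma$ has positive genus, triangles may have repeated vertices and loop sides and interior arcs may be non-separating, so this bookkeeping -- together with the case analysis on the possible flips $F$ -- is where the real work lies. An alternative for the last step could be to exhibit an ear created near $a_n$ by $F$ and to invoke Lemma~\ref{Alemma.2.1}, though ruling out the alternative it permits seems to require essentially the same analysis.
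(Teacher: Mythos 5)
Your high-level plan is close to the paper's: both arguments track the triangle incident to $\alpha_n$ along the geodesic, use that flips not incident to $\alpha_n$ leave that triangle untouched, and conclude that getting from the configuration in $A_n^-$ (an interior arc $a_1a_n$ plus a loop at $a_1$ bounding the handle) to the ear $\{\alpha_n,\alpha_1,\delta\}$ of $A_n^+$ cannot be done in only two flips incident to $\alpha_n$. But there is a genuine gap at exactly the step you flag as ``where the real work lies.'' You assert that after the second flip incident to $\alpha_n$ ``the triangle incident to $\alpha_n$ obtained right after this next flip still has the handle of $\Sigma$ immediately behind it, hence is still not the ear,'' but you never make the invariant ``the handle lies immediately behind $\alpha_n$'' precise, nor carry out the case analysis on which side is flipped. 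This is not a routine verification: after the first incident flip the triangle is $\{\alpha_n,\beta,\delta\}$ with $\beta$ an interior arc $a_1a_n$ and $\delta$ a loop at $a_1$, and a second incident flip can remove either of $\beta$ or $\delta$, replacing it by an arc from the adjacent triangle; in one of the two cases the resulting triangle incident to $\alpha_n$ need no longer have a loop or a handle-bounding arc as a side, so your proposed invariant is not obviously preserved in the form you state it.

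The paper closes this step with a different, and cleaner, argument. It assumes there are exactly two incident flips and derives a contradiction directly: the second incident flip, being the last one, must leave behind the ear $\{\alpha_n,\alpha_1,\zeta\}$ of $A_n^+$, so the arc it introduces must be $\zeta$ (the arc $a_na_2$ cutting off $a_1$). But $\zeta$ would have to cross both interior arcs $\beta$ and the loop $\delta$ that still bound the triangle incident to $\alpha_n$ just before this flip, and a flip can remove only one of them; so no such flip exists. This is precisely the ``crossing'' formalization of your ``handle behind $\alpha_n$'' intuition, but it avoids introducing and propagating an invariant. I would also note that your justification that the flip creating the ear is incident to $\alpha_n$ (``$\delta$ meets $a_n$, which makes the flip incident to $\alpha_n$'') is not the correct criterion: incidence to $\alpha_n$ is defined by the deletion of $a_n$ trivializing the flip, not by the introduced arc having $a_n$ as an endpoint. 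The correct statement you want -- that any flip changing the triangle incident to $\alpha_n$ is incident to $\alpha_n$ -- is true and follows from the definition of the contraction, but needs that argument rather than the one you give. In summary: right ideas and same overall strategy, but the decisive step is left as a sketch and the one justification you do give for incidence is not the right one.
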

\begin{proof}
Let $(T_i)_{0\leq{i}\leq{k}}$ be a geodesic from $A_n^-$ to $A_n^+$. Assume that the first flip incident to $\alpha_n$ along $(T_i)_{0\leq{i}\leq{k}}$ is the $j$-th one, and that it introduces an arc $\beta$ with vertices $a_1$ and $a_n$.
\begin{figure}
\begin{centering}
\includegraphics{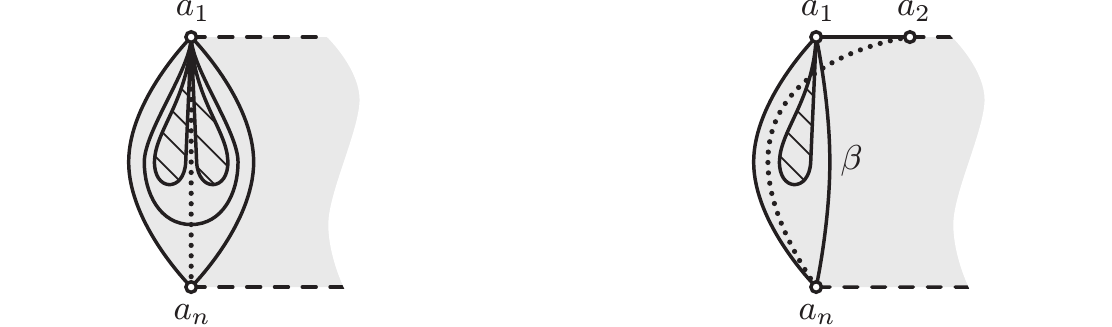}
\caption{The $j$-th (left) and $l$-th (right) flips performed along path $(T_i)_{0\leq{i}\leq{k}}$ in the proof of Lemma \ref{Alemma.4.1}. In each case, the introduced arc is dotted and the solid arcs belong to triangulation $T_{i-1}$.}\label{Afigure.4.3}
\end{centering}
\end{figure}
This flip must then be the one shown on the left of Fig. \ref{Afigure.4.3}.

As the triangles incident to $\alpha_n$ in $T_j$ and in $A_n^+$ are distinct, at least one of the last $k-j$ flips along our geodesic is incident to this arc. Assume that there is exactly one such flip, say the $l$-th one. This flip then necessarily introduces an ear in vertex $a_1$. Hence, the arc introduced by it must be the one shown as a dotted line on the right of Fig. \ref{Afigure.4.3}. This is impossible because this arc crosses two interior arcs of $T_{l-1}$: arc $\beta$, and the arc twice incident to $a_1$ that bounds the triangle incident to $\alpha_n$ in $T_{l-1}$. Therefore, at least three flips are incident to $\alpha_n$ along $(T_i)_{0\leq{i}\leq{k}}$.
\end{proof}

The proof of the following lemma is, amost word for word, that of Lemma 4.2 in \cite{ParlierPournin2014}. We include it for completeness.

\begin{lemma}\label{Alemma.4.2}
Let $n$ be an integer greater than $2$. Consider a geodesic from $A_n^-$ to $A_n^+$ whose first flip incident to $\alpha_n$ introduces an arc with vertices $a_1$ and $a_2$. If $\alpha_n$ is incident to at most $2$ flips along this geodesic then $\alpha_1$ is incident to at least $4$ flips along it.
\end{lemma}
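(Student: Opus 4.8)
### Proof plan for Lemma \ref{Alemma.4.2}

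The plan is to argue by a careful case analysis on what the first flip incident to $\alpha_n$ can do, given that it introduces the arc with endpoints $a_1$ and $a_2$. Call $\beta$ that arc and say this flip is the $j$-th one along the geodesic $(T_i)_{0\leq i\leq k}$. First I would draw the triangle of $A_n^-$ incident to $\alpha_n$: in $A_n^-$ the arc $\alpha_n$ sits in the re-triangulated ear at $a_1$, so its incident triangle has vertices $a_1$, $a_n$, $a_1$ (the third side being the loop $\gamma$ or the arc isotopic to $\alpha_n$, depending on the exact picture in Fig. \ref{Afigure.4.2}). The key structural observation is that, for the $j$-th flip to be \emph{the first one touching $\alpha_n$}, the triangle incident to $\alpha_n$ in $T_{j-1}$ must still be the one inherited from $A_n^-$ — nothing incident to $\alpha_n$ has changed yet — so that triangle is forced, and the flip introducing $\beta = (a_1,a_2)$ across $\alpha_n$ is essentially unique up to the ambient picture. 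This pins down $T_j$ near $\alpha_n$: after the flip, $\alpha_n$ is incident to a triangle with vertices $a_1$, $a_2$, $a_n$ (or $a_1,a_2$ plus a copy of $a_n$), and $\beta$ is now present.

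Next I would run the same endgame comparison used in Lemma \ref{Alemma.4.1}. In $A_n^+$, the arc $\alpha_n$ is incident to a triangle that looks completely different: the ear being re-triangulated in $A_n^+$ is at $a_{\lfloor n/2\rfloor+1}$, and near $a_n$ triangulation $A_n^+$ just has the zigzag of $Z_n$, so $\alpha_n$ in $A_n^+$ is incident to a triangle of the zigzag with vertices $a_{n-1}, a_n, a_1$ (again reading off Fig. \ref{Afigure.4.1} and Fig. \ref{Afigure.4.2}). Since the triangles incident to $\alpha_n$ in $T_j$ and in $A_n^+$ differ, at least one more flip after step $j$ is incident to $\alpha_n$; together with the hypothesis ``at most $2$ flips incident to $\alpha_n$'', there is then \emph{exactly} one more such flip, say the $l$-th one, and $T_l$ already has the final triangle at $\alpha_n$. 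So between steps $j$ and $l$ the arc $\beta$, and all other arcs incident to $\alpha_n$, are untouched at $\alpha_n$; in particular $\beta$ survives in $T_{l-1}$, and the $l$-th flip must convert the triangle with vertices $a_1,a_2,a_n$ into the one with vertices $a_{n-1},a_n,a_1$ — i.e. it must introduce the arc with endpoints $a_1$ and $a_{n-1}$ across $\alpha_n$, and it must remove $\beta$ (the only arc that can be flipped out of that triangle at $\alpha_n$).

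The heart of the proof is then to track what must happen around $\alpha_1$. The arc $\beta=(a_1,a_2)$ is created at step $j$ and destroyed at step $l$. I claim each of these forces a flip incident to $\alpha_1$, and that there must be at least two more. For the first point: when $\beta$ is introduced at step $j$, in $A_n^-$ the vertices $a_1,a_2$ together with $\alpha_1$ bound an ear (the ear of $Z_n$ at $a_1$, now re-triangulated), so introducing a fresh arc $(a_1,a_2)$ across $\alpha_n$ either requires $\alpha_1$'s triangle to already have been modified, or makes $T_j$ have an ear at... — here I would argue that before $\beta$ can be placed as in Fig. \ref{Afigure.4.3}-style picture, some flip incident to $\alpha_1$ must have occurred (the ear configuration at $a_1$ in $A_n^-$ is incompatible with the triangle that $\alpha_n$ acquires at step $j$ unless $\alpha_1$ has been flipped). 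Symmetrically, to return to $A_n^+$, whose triangle at $\alpha_1$ is again an ear-type triangle of the zigzag, a flip incident to $\alpha_1$ is needed after $\beta$ is removed. That already gives two flips incident to $\alpha_1$; to upgrade to \emph{four}, I would invoke Lemma \ref{Alemma.2.1}: the configuration ``$A_n^-$ has an ear at $a_1$ involving $\alpha_n$ and $\alpha_1$, while in $A_n^+$ the triangles at $\alpha_n$ and $\alpha_1$ share no arc'' is exactly its hypothesis (with the roles $p=n$, $q=1$), forcing $2$ flips incident to $\alpha_1$ \emph{beyond} accounting already made for $\alpha_n$; combined with a second application of Lemma \ref{Alemma.2.1} or a direct tracking argument at the ear $a_{\lfloor n/2\rfloor+1}$-vs-$a_1$ side, one reaches $4$.

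The step I expect to be the main obstacle is the last one: making precise the claim that creating and later destroying $\beta=(a_1,a_2)$ forces \emph{two} flips incident to $\alpha_1$ on each side rather than just one, and combining this cleanly with the at-most-$2$-flips-at-$\alpha_n$ hypothesis without double-counting. The bookkeeping — which flips are incident to which boundary arc, and showing the relevant triangles at $\alpha_1$ in the intermediate triangulations genuinely differ from both endpoints' triangles — is delicate, and this is presumably why the authors say the argument mirrors Lemma 4.2 of \cite{ParlierPournin2014} almost word for word: the cleanest route is to import that lemma's template verbatim, checking only that the local pictures at $a_1$ and $a_n$ in $A_n^-$ and $A_n^+$ (which come from the zigzag $Z_n$ plus the one-holed-genus-$1$ filling) match the ones used there.
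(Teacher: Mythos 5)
Your general strategy — pin down the $j$-th and the single later $l$-th flip incident to $\alpha_n$, then count flips incident to $\alpha_1$ — is the paper's strategy, but the crucial step (getting from $2$ to $4$ flips at $\alpha_1$) is handled differently and, as proposed, does not work. The appeal to Lemma~\ref{Alemma.2.1} has false hypotheses: $A_n^-$ does \emph{not} have an ear in $a_1$ (the ear of $Z_n$ at $a_1$ was re-triangulated in building $A_n^-$, so the triangles incident to $\alpha_n$ and to $\alpha_1$ there are $\{\alpha_n,\beta',\gamma\}$ and $\{\beta',\alpha_1,(a_n,a_2)\}$, which are distinct), while in $A_n^+$ the triangles at $\alpha_n$ and $\alpha_1$ are in fact \emph{the same} ear triangle of the zigzag, so they certainly share arcs. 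So with $p=n$, $q=1$ the hypotheses of Lemma~\ref{Alemma.2.1} fail whichever of $A_n^\pm$ you take for $U$, and a second application does not rescue it.

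There is also a small but telling labeling slip: the $l$-th flip must produce the triangle of $A_n^+$ at $\alpha_n$, which (reading $Z_n$ from $a_n$ with its ear at $a_1$) is $\{\alpha_n,\alpha_1,(a_n,a_2)\}$; the introduced arc is therefore $(a_n,a_2)$, not $(a_1,a_{n-1})$. More substantively, your first two flips at $\alpha_1$ are located incorrectly. You argue that \emph{separate} flips incident to $\alpha_1$ must happen before the $j$-th flip and after the $l$-th flip, but in fact the $j$-th and $l$-th flips themselves are incident to $\alpha_1$: the $j$-th flip exchanges $\beta'$ for $\beta=(a_1,a_2)$ across the quadrilateral bounded by $\alpha_n,\gamma,\alpha_1,(a_n,a_2)$, thereby also changing the triangle at $\alpha_1$, and symmetrically for the $l$-th flip. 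The missing idea that gives the remaining two is a loop-tracking argument, not an ear argument: immediately after the $j$-th flip the triangle at $\alpha_1$ is $\{\gamma,\alpha_1,\beta\}$ with $\gamma$ a loop at $a_1$, while immediately before the $l$-th flip the triangle at $\alpha_1$ must be $\{\beta,\alpha_1,\eta\}$ with $\eta$ a loop at $a_2$ (this is forced for the $l$-th flip to create $(a_n,a_2)$). Since $\beta$ cannot be flipped strictly between steps $j$ and $l$ (any such flip would again be incident to $\alpha_n$), one needs one flip to remove the $a_1$-loop from the $\alpha_1$-triangle and a distinct flip to introduce the $a_2$-loop into it, both incident to $\alpha_1$ and both distinct from the $j$-th and $l$-th flips — yielding the required four.
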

\begin{proof}
Let $(T_i)_{0\leq{i}\leq{k}}$ be a geodesic from $A_n^-$ to $A_n^+$ whose first flip incident to $\alpha_n$, say the $j$-th one, introduces an arc with vertices $a_1$ and $a_2$. This flip must then be the one shown on the left of Fig. \ref{Afigure.4.4}. Note that it is incident to arc $\alpha_1$.

Assume that at most one flip along $(T_i)_{0\leq{i}\leq{k}}$ other than the $j$-th one is incident to $\alpha_n$. In this case, there must be exactly one such flip among the last $k-j$ flips of $(T_i)_{0\leq{i}\leq{k}}$, say the $l$-th one. Moreover, this flip replaces the triangle of $T_j$ incident to $\alpha_n$ by the triangle of $A_n^+$ incident to $\alpha_n$. There is only one way to do so, as depicted on the right of Fig. \ref{Afigure.4.4}. Note that this flip is also incident to $\alpha_1$.

Finally, as the arc introduced by the $j$-th flip along $(T_i)_{0\leq{i}\leq{k}}$ is not removed before the $l$-th flip, there must be two more flips incident to arc $\alpha_1$ along this geodesic: the flip that removes the loop arc with vertex $a_1$ shown on the left of Fig. \ref{Afigure.4.4} and the flip that introduces the loop arc with vertex $a_2$ shown on the right of the figure. This proves that at least four flips are incident to $\alpha_1$ along $(T_i)_{0\leq{i}\leq{k}}$.
\end{proof}

\begin{figure}
\begin{centering}
\includegraphics{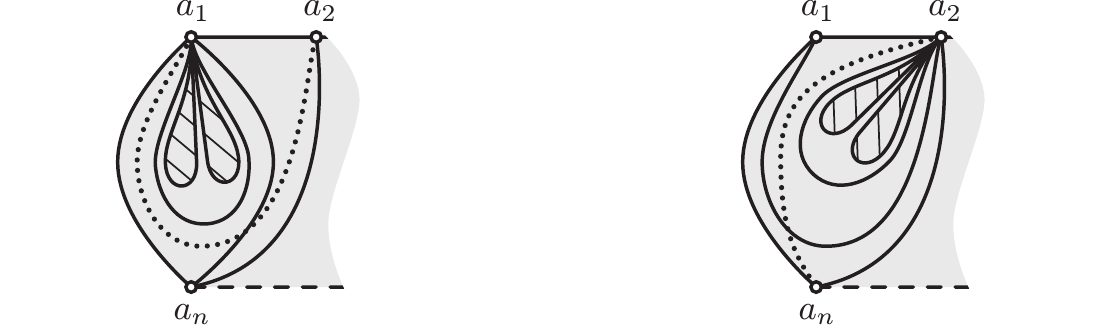}
\caption{The $j$-th (left) and $l$-th (right) flip performed along path $(T_i)_{0\leq{i}\leq{k}}$ in the proof of Lemma \ref{Alemma.4.2}. In each case, the introduced arc is dotted and the solid arcs belong to triangulation $T_{i-1}$.}\label{Afigure.4.4}
\end{centering}
\end{figure}

The proof of the last of the three lemmas is slightly more involved.

\begin{lemma}\label{Alemma.4.3}
For $n\geq4$, consider a geodesic from $A_n^-$ to $A_n^+$ whose first flip incident to $\alpha_n$ introduces an arc with vertices $a_1$ and $a_p$, where $2<p<n$. Then:
$$
d(A_n^-,A_n^+)\geq\min(\{d(A_{n-1}^-,A_{n-1}^+)+3,d(A_{n-2}^-,A_{n-2}^+)+5\})\mbox{.}
$$
\end{lemma}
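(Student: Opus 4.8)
The plan is to follow the pattern of Lemmas~\ref{Alemma.4.1} and~\ref{Alemma.4.2}. Let $(T_i)_{0\leq i\leq k}$ be a geodesic from $A_n^-$ to $A_n^+$ whose first flip incident to $\alpha_n$, say the $j$-th one, introduces an arc $\beta$ with vertices $a_1$ and $a_p$, where $2<p<n$. First I would pin down this flip. Since none of the first $j-1$ flips is incident to $\alpha_n$, the triangle of $T_{j-1}$ incident to $\alpha_n$ is the triangle $R$ of $A_n^-$ bounded by $\alpha_n$, by the arc $\beta_0$ with vertices $a_1$ and $a_n$ isotopic to $\alpha_n$, and by the loop $\gamma$ twice incident to $a_1$. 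A flip of $\gamma$ cannot introduce an arc with vertices $a_1$ and $a_p$ when $p\notin\{1,n\}$, because the triangle bounded by $\gamma$ inside the copy of $\Sigma_1$ always has its two other sides equal to loops at $a_1$; hence the $j$-th flip flips $\beta_0$. Writing $R'$ for the triangle of $T_{j-1}$ on the other side of $\beta_0$, the inequality $2<p<n$ forces $R'$ to have three distinct vertices $a_1$, $a_p$, $a_n$, hence sides $\beta_0$, $e=[a_p,a_n]$ and $e'=[a_1,a_p]$, and the $j$-th flip replaces $\beta_0$ by $\beta=[a_1,a_p]$, producing in $T_j$ the triangle $\{\alpha_n,\beta,e\}$ incident to $\alpha_n$ and the triangle $\{\gamma,\beta,e'\}$ on the other side of $\beta$.

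If at least three flips along the geodesic are incident to $\alpha_n$, then, since $A_n^-{\contract}n$ and $A_n^+{\contract}n$ are isomorphic to $A_{n-1}^-$ and $A_{n-1}^+$ through the same vertex relabeling, Theorem~\ref{Atheorem.2.1} gives $d(A_n^-,A_n^+)\geq d(A_{n-1}^-,A_{n-1}^+)+3$ and (\ref{Aequation.4.1}) holds. So I would assume there is exactly one further flip incident to $\alpha_n$, the $l$-th one, $l>j$. Between the $j$-th and $l$-th flips the arcs $\beta$ and $e$ remain the two non-boundary sides of the triangle incident to $\alpha_n$, and since the triangle of $A_n^+$ incident to $\alpha_n$ is the ear triangle $\{\alpha_n,\alpha_1,\delta\}$ at $a_1$ — with $\delta$ the interior arc $[a_n,a_2]$ inherited from the ear of $Z_n$ at $a_1$ — the $l$-th flip must turn $\{\alpha_n,\beta,e\}$ directly into $\{\alpha_n,\alpha_1,\delta\}$. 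Looking at which of the two diagonals is flipped, the new triangle incident to $\alpha_n$ is always $\{\alpha_n,s,\mu\}$ with $\mu$ the newly created (hence interior) arc and $s$ a side of the triangle of $T_{l-1}$ on the far side of the flipped arc; since $\alpha_1$ is a boundary arc it cannot equal $\mu$, and a short computation (here $n\geq 4$ is used) rules out the flip of $e$ and shows that the flip of $\beta$ forces the triangle of $T_{l-1}$ on the far side of $\beta$ to be $\{\beta,\alpha_1,[a_2,a_p]\}$ — hence to be the triangle incident to $\alpha_1$ — with $\mu=\delta$. Now compare the triangle incident to $\alpha_1$ at steps $0$ and $l-1$: its pairs of non-boundary sides are $\{\beta_0,\delta\}$ and $\{\beta,[a_2,a_p]\}$, which are disjoint when $2<p<n$, so at least two of the first $l-1$ flips are incident to $\alpha_1$; and the $l$-th flip, flipping $\beta$, is also incident to $\alpha_1$. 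Hence at least three flips along the geodesic are incident to $\alpha_1$.

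With $i=1$ it then remains to exhibit a geodesic from $A_n^-{\contract}1$ to $A_n^+{\contract}1$ along which at least two flips are incident to $\alpha_j$ for some $j\in\{n-1,n\}$; applying Theorem~\ref{Atheorem.2.1} twice — deleting $a_1$ and then $a_j$, using that this double deletion yields $A_{n-2}^{\pm}$ up to relabeling — then gives $d(A_n^-,A_n^+)\geq d(A_{n-2}^-,A_{n-2}^+)+5$, exactly as in the derivation of (\ref{Aequation.4.3}), and the claimed inequality follows. For the required two flips I would use Lemma~\ref{Alemma.2.1}: deleting $a_1$ merges $\alpha_n$ and $\alpha_1$ into a single boundary arc next to the copy of $\Sigma_1$ carried by $\gamma$; one checks that $A_n^+{\contract}1$ has an ear at the vertex where this merge happens, whereas in $A_n^-{\contract}1$ the two triangles incident to the two boundary arcs meeting at that vertex are arc-disjoint, so Lemma~\ref{Alemma.2.1} forces at least two flips incident to one of these two boundary arcs along any geodesic, and checking the relabeling gives its index in $\{n-1,n\}$.

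The step I expect to be the main obstacle is the one yielding three flips incident to $\alpha_1$ when at most two flips are incident to $\alpha_n$. Contrary to Lemmas~\ref{Alemma.4.1} and~\ref{Alemma.4.2}, between the $j$-th and $l$-th flips the triangulation near $\alpha_n$ may be freely rearranged apart from the two arcs $\beta$ and $e$, so the argument is not a single crossing obstruction but a case analysis on the last flip incident to $\alpha_n$ combined with careful tracking of the triangle incident to $\alpha_1$ across the whole geodesic. A secondary difficulty is to verify, after the deletion of $a_1$, that $A_n^-{\contract}1$ and $A_n^+{\contract}1$ do meet the hypotheses of Lemma~\ref{Alemma.2.1} and that the boundary arc it singles out has index in $\{n-1,n\}$; this is routine but requires some care with the vertex relabeling.
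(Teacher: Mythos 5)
Your proof takes a genuinely different route from the paper's, but it has gaps at two places, the second of which I do not see how to repair.

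The middle step asserts that because the non-boundary sides of the $\alpha_1$-triangle are $\{\beta_0,\delta\}$ in $T_0$ and $\{\beta,[a_2,a_p]\}$ in $T_{l-1}$, and these sets are disjoint, at least two of the first $l-1$ flips are incident to $\alpha_1$. That inference is not valid: a single flip incident to $\alpha_1$ replaces $\{\alpha_1,x,y\}$ by $\{\alpha_1,s,\mu\}$ where $\mu$ is the newly introduced diagonal and $s$ is a side borrowed from the far triangle, so one flip generically changes \emph{both} non-boundary sides. (The conclusion of three flips incident to $\alpha_1$ can likely be rescued by a more careful argument exploiting that the $j$-th flip removes $\beta_0$ yet is not incident to $\alpha_1$ because at step $j-1$ the $\alpha_1$-triangle no longer contains $\beta_0$, but the disjointness argument as stated does not give it.)

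The final step is where the proof genuinely fails. You want to apply Lemma \ref{Alemma.2.1} to $A_n^+\contract 1$ and $A_n^-\contract 1$, claiming $A_n^+\contract 1$ has an ear at the merge vertex (old label $a_2$). It does not: deleting $a_1$ from $A_n^+$ collapses the ear of $Z_n$ at $a_1$ and turns the next zigzag triangle $\{[a_n,a_2],[a_2,a_{n-1}],\alpha_{n-1}\}$ into $\{\alpha_n,[a_2,a_{n-1}],\alpha_{n-1}\}$, which is an ear at $a_n$, while the triangle of $A_n^+\contract 1$ incident to $\alpha_2$ is the unchanged $\{\alpha_2,[a_2,a_{n-1}],[a_{n-1},a_3]\}$. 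Even using the actual ear at $a_n$, Lemma \ref{Alemma.2.1} fails to apply: in $A_n^-\contract 1$ the triangles incident to $\alpha_{n-1}$ and to $\alpha_n$ are $\{\alpha_{n-1},\beta_0,[a_2,a_{n-1}]\}$ and $\{\alpha_n,\beta_0,\gamma\}$, which share the arc $\beta_0$. So this route does not produce the two extra flips needed after deleting $a_1$. The paper's proof proceeds quite differently: it locates an ear at some vertex $a_q$ inside the \emph{intermediate} triangulation $T_j$ (in the polygon cut off by the arc from $a_1$ to $a_p$), applies Lemma \ref{Alemma.2.1} to each of the two sub-geodesics obtained by splitting at $T_j$, and then treats separately the case where some $\alpha_r$ with $r\in\{q-1,q\}$ carries at least four flips and the delicate case where $\alpha_{q-1}$ and $\alpha_q$ each carry exactly three flips. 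Your approach, which tries to harvest everything near $a_1$, does not generate enough independent incidences to reach the summand $+5$.
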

\begin{proof}
Let $(T_i)_{0\leq{i}\leq{k}}$ be a geodesic from $A_n^-$ to $A_n^+$ whose first flip incident to $\alpha_n$, say the $j$-th one, introduces an arc with vertices $a_1$ and $a_p$, where $2<p<n$. This flip is depicted in Fig. \ref{Afigure.4.5}. As a first step, we will show that $T_j$ has an ear in some vertex $a_q$ where either $2\leq{q}<\lceil{n/2}\rceil$ or $\lceil{n/2}\rceil+1<q\leq{n}$.

Assume that $p$ is not greater than $\lceil{n/2}\rceil$. Consider the arc of $T_j$ with vertices $a_1$ and $a_p$ shown as a solid line on the left of Fig. \ref{Afigure.4.5}.
\begin{figure}
\begin{centering}
\includegraphics{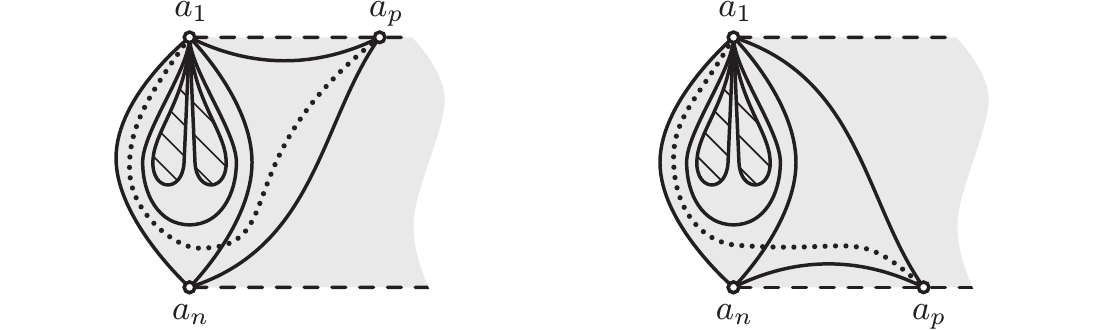}
\caption{The $j$-th flip performed along path $(T_i)_{0\leq{i}\leq{k}}$ in the proof of Lemma \ref{Alemma.4.3} when $p\leq{\lceil{n/2}\rceil}$ (left) and when $p>\lceil{n/2}\rceil$ (right). In each case, the introduced arc is dotted and the solid edges belong to triangulation $T_{j-1}$.}\label{Afigure.4.5}
\end{centering}
\end{figure}
The portion of $T_j$ bounded by this arc and by arcs $\alpha_1$, ..., $\alpha_{p-1}$ is a triangulation of disk $\Delta_p$. If $p>3$, then this triangulation has at least two ears, and one of them is also an ear of $T_j$ in some vertex $a_q$ where $2\leq{q}<p$. As $p\leq\lceil{n/2}\rceil$, the desired inequalities on $q$ hold. If $p=3$, then $T_j$ has an ear in $a_2$ since the triangulation of $\Delta_p$ induced by $T_j$ is made up of a single triangle, and these inequalities on $q$ also hold. 

Now assume that $\lceil{n/2}\rceil<p<n$. Consider the arc with vertices $a_1$ and $a_p$ introduced by the $j$-th flip along $(T_i)_{0\leq{i}\leq{k}}$ and shown as a dotted line on the right of Fig. \ref{Afigure.4.5}. The portion of $T_j$ bounded by this arc and by arcs $\alpha_p$, ..., $\alpha_n$ is a triangulation of $\Delta_{n-p+2}$. Since $n-p+2$ is at least $3$, an argument similar to the one used in the last paragraph shows that $T_j$ has an ear in some vertex $a_q$ where $p<q\leq{n}$. As $p$ is greater than $\lceil{n/2}\rceil$, we obtain $\lceil{n/2}\rceil+1<q\leq{n}$, as desired.

Note that, as $2\leq{q}<\lceil{n/2}\rceil$ or $\lceil{n/2}\rceil+1<q\leq{n}$, the triangles incident to arcs $\alpha_{q-1}$ and to $\alpha_q$ in either $A_n^-$ or $A_n^+$ do not share an arc. Since $T_j$ has an ear in $a_q$, Lemma \ref{Alemma.2.1} can be invoked with $U=T_j$ for the two geodesics obtained by splitting $(T_i)_{0\leq{i}\leq{k}}$ at $T_j$. Doing so, we find that either $\alpha_{q-1}$ and $\alpha_q$ are both incident to exactly $3$ flips along this geodesic or one of these arcs is incident to at least $4$ flips along it. We will review the two cases separately.

If $\alpha_r$ is incident to at least $4$ flips along $(T_i)_{0\leq{i}\leq{k}}$, where $r$ is equal to $q-1$ or to $q$, then Theorem \ref{Atheorem.2.1} yields
\begin{equation}\label{Alemma.4.3.equation.1}
d(A_n^-,A_n^+)\geq{d(A_n^-\contract{r},A_n^+\contract{r})+4}\mbox{.}
\end{equation}

Call $s=n-q+1$ and observe that arc $\alpha_s$ is not incident to the same triangle in $A_n^-\contract{r}$ and in $A_n^+\contract{r}$. Hence, some flip must be incident to this arc along any geodesic between $A_n^-\contract{r}$ and $A_n^+\contract{r}$. Invoking Theorem \ref{Atheorem.2.1} again, we find
\begin{equation}\label{Alemma.4.3.equation.2}
d(A_n^-\contract{r},A_n^+\contract{r})\geq{d(A_n^-\contract{r}\contract{s},A_n^+\contract{r}\contract{s})+1}\mbox{.}
\end{equation}

As $A_n^-\contract{r}\contract{s}$ and $A_n^+\contract{r}\contract{s}$ are isomorphic to $A_{n-2}^-$ and $A_{n-2}^+$ by the same vertex relabeling, the desired result is obtained combining (\ref{Alemma.4.3.equation.1}) and (\ref{Alemma.4.3.equation.2}).

Now assume that $\alpha_{q-1}$ and $\alpha_q$ are both incident to exactly $3$ flips along $(T_i)_{0\leq{i}\leq{k}}$. If $q=n$, then the result immediately follows from Theorem \ref{Atheorem.2.1} because $A_n^-\contract{n}$ and $A_n^+\contract{n}$ are isomorphic to $A_{n-1}^-$ and $A_{n-1}^+$ by the same vertex relabeling. We will therefore also assume that $q<n$. 

Note that at least one of the first $j$ flips and at least one of the last $k-j$ flips along $(T_i)_{0\leq{i}\leq{k}}$ are incident to either $\alpha_{p-1}$ or $\alpha_p$ because these arcs are not incident to the same triangles in $T_j$ and in either $A_n^-$ or $A_n^+$.
\begin{figure}[b]
\begin{centering}
\includegraphics{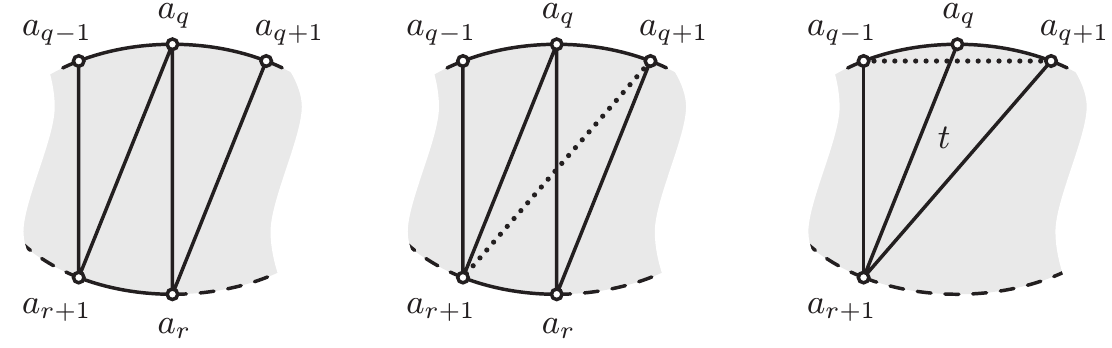}
\caption{The portion of either $A_n^-$ or $A_n^+$ next to vertex $a_q$ (left), the $l$-th flip along the geodesic used in the proof of Lemma \ref{Alemma.4.3} (center), and the first flip incident to $\alpha_{q-1}$ along this geodesic (right). The arc introduced by each flip is dotted.}\label{Afigure.4.6}
\end{centering}
\end{figure}
We can in fact assume without loss of generality that exactly one of the first $j$ flips and two of the last $k-j$ flips along $(T_i)_{0\leq{i}\leq{k}}$ are incident to $\alpha_{q-1}$ by, if needed, reversing geodesic $(T_i)_{0\leq{i}\leq{k}}$ and exchanging $A_n^-$ with $A_n^+$. In this case, exactly two of the first $j$ flips and exactly one of the last $k-j$ flips along this geodesic are incident to $\alpha_q$.

Call $r=n-q+1$. The portion of either $A_n^-$ or $A_n^+$ placed between arcs $\alpha_{q-1}$, $\alpha_q$, and $\alpha_r$ is depicted on the left of Fig. \ref{Afigure.4.6}.

Since only one flip is incident to $\alpha_{q-1}$ along $(T_i)_{0\leq{i}\leq{j}}$, this flip must be the one shown on the right of Fig. \ref{Afigure.4.6}, and it is also incident to $\alpha_{q}$. Now consider the triangle incident to $\alpha_q$ when this flip is performed, labeled $t$ in the figure. This triangle must be introduced by the first flip incident to $\alpha_q$, earlier along the geodesic. This flip, say the $l$-th one, is shown in the center of Fig. \ref{Afigure.4.6}.

Consider a geodesic $(T'_i)_{0\leq{i}\leq{k'}}$ from $A_n^-\contract{q-1}$ to $T_l\contract{q-1}$, and a geodesic $(T''_i)_{j\leq{i}\leq{k''}}$ from $T_l\contract{q-1}$ to $A_n^+\contract{q-1}$. Since three flips are incident to $\alpha_{q-1}$ along $(T_i)_{0\leq{i}\leq{k}}$, it follows from Theorem \ref{Atheorem.2.1} that
\begin{equation}\label{Alemma.4.3.equation.3}
k'+k''\leq{d(A_n^-,A_n^+)-3}\mbox{.}
\end{equation}

Observe that the triangles incident to $\alpha_r$ in $A_n^-\contract{q-1}$ and in $T_l\contract{q-1}$ are distinct. Hence, at least one flip is incident to $\alpha_r$ along $(T'_i)_{0\leq{i}\leq{k'}}$ and by Theorem \ref{Atheorem.2.1},
\begin{equation}\label{Alemma.4.3.equation.4}
k'\geq{d(A_n^-\contract{q-1}\contract{r},T_l\contract{q-1}\contract{r})+1}\mbox{.}
\end{equation}

Similarly,  the triangles incident to $\alpha_r$ in $T_l^-\contract{q-1}$ and in $A_n^+\contract{q-1}$ are distinct. Hence, at least one flip is incident to $\alpha_r$ along $(T''_i)_{0\leq{i}\leq{k''}}$ and by Theorem \ref{Atheorem.2.1},
\begin{equation}\label{Alemma.4.3.equation.5}
k''\geq{d(T_l\contract{q-1}\contract{r},A_n^+\contract{q-1}\contract{r})+1}\mbox{.}
\end{equation}

By the triangle inequality, (\ref{Alemma.4.3.equation.4}) and (\ref{Alemma.4.3.equation.5}) yield
\begin{equation}\label{Alemma.4.3.equation.6}
k'+k''\geq{d(A_n^-\contract{q-1}\contract{r},A_n^+\contract{q-1}\contract{r})+2}\mbox{.}
\end{equation}

Since $A_n^-\contract{q-1}\contract{r}$ and $B_n^+\contract{q-1}\contract{r}$ are isomorphic to $A_{n-2}^-$ and $A_{n-2}^+$ by the same vertex relabeling, the desired inequality is obtained combining (\ref{Alemma.4.3.equation.3}) and (\ref{Alemma.4.3.equation.6}).
\end{proof}

We can now prove the main inequality.

\begin{theorem}\label{Atheorem.4.1}
For every integer $n$ greater than $2$,
$$
d(A_n^-,A_n^+)\geq\min(\{d(A_{n-1}^-,A_{n-1}^+)+3,d(A_{n-2}^-,A_{n-2}^+)+5\})\mbox{.}
$$
\end{theorem}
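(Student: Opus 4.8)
The plan is to combine the three preceding lemmas into a case analysis on the first flip incident to $\alpha_n$ along a geodesic from $A_n^-$ to $A_n^+$. Since $\alpha_n$ is not incident to the same triangle in $A_n^-$ and $A_n^+$, every such geodesic contains at least one flip incident to $\alpha_n$; fix a geodesic $(T_i)_{0\leq i\leq k}$ and look at its first flip incident to $\alpha_n$, say the $j$-th. Inspecting the triangle of $T_{j-1}$ incident to $\alpha_n$ (it is bounded by $\alpha_n$, the arc $\alpha_{n-1}$-side boundary arc $\alpha_1$ after relabelling around $a_1$, and a loop at $a_1$, as dictated by the structure of $A_n^-$ near $a_1$), the arc introduced by this flip must have $a_n$ as one endpoint and some $a_p$ as the other, where $1\leq p<n$. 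So there are exactly three cases: $p=1$, $p=2$, and $2<p<n$ (the value $p=1$ meaning the introduced arc joins $a_1$ and $a_n$; I will double-check that $p=n$ is impossible since that would not be an interior arc distinct from $\alpha_n$).

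First I would dispose of the case $2<p<n$: this is exactly Lemma \ref{Alemma.4.3}, which directly gives
$$
d(A_n^-,A_n^+)\geq\min(\{d(A_{n-1}^-,A_{n-1}^+)+3,d(A_{n-2}^-,A_{n-2}^+)+5\})\mbox{,}
$$
so we are done in that case. Next, the case $p=1$: by Lemma \ref{Alemma.4.1}, the geodesic has at least two further flips incident to $\alpha_n$, hence at least three flips incident to $\alpha_n$ in total. Since $A_n^-\contract n$ and $A_n^+\contract n$ are isomorphic to $A_{n-1}^-$ and $A_{n-1}^+$ via the same vertex relabelling, Theorem \ref{Atheorem.2.1} applied with $f=3$ gives inequality (\ref{Aequation.4.2}), i.e. $d(A_n^-,A_n^+)\geq d(A_{n-1}^-,A_{n-1}^+)+3$, which implies the claimed bound.

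The remaining case $p=2$ is where the two sub-bounds of the minimum both genuinely enter. By Lemma \ref{Alemma.4.2}, either $\alpha_n$ is incident to at least $3$ flips along the geodesic, or $\alpha_1$ is incident to at least $4$ flips along it. In the former subcase we argue exactly as for $p=1$: three flips incident to $\alpha_n$ plus the relabelling isomorphism $A_n^\pm\contract n\cong A_{n-1}^\pm$ give $d(A_n^-,A_n^+)\geq d(A_{n-1}^-,A_{n-1}^+)+3$ via Theorem \ref{Atheorem.2.1}. In the latter subcase, Theorem \ref{Atheorem.2.1} with $f=4$ on $\alpha_1$ yields $d(A_n^-,A_n^+)\geq d(A_n^-\contract 1,A_n^+\contract 1)+4$; then, observing that $\alpha_{n-1}$ (the boundary arc playing the role of $\alpha_j$ with $j\in\{n-1,n\}$ relative to $i=1$) is not incident to the same triangle in $A_n^-\contract 1$ and $A_n^+\contract 1$, a further application of Theorem \ref{Atheorem.2.1} with $f=1$ gives $d(A_n^-\contract 1,A_n^+\contract 1)\geq d(A_n^-\contract 1\contract{n-1},A_n^+\contract 1\contract{n-1})+1$; since $A_n^\pm\contract 1\contract{n-1}\cong A_{n-2}^\pm$ by the same vertex relabelling, we conclude $d(A_n^-,A_n^+)\geq d(A_{n-2}^-,A_{n-2}^+)+5$. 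In every case one of the two quantities in the minimum is dominated, so the inequality holds. The main obstacle is not any single estimate but making sure the bookkeeping of vertex relabellings is consistent — i.e. that after contracting $a_1$ the arc I delete next is indeed in $\{n-i,n-i+1\}$ with $i=1$ as required for the isomorphism $A_n^\pm\contract 1\contract j\cong A_{n-2}^\pm$ quoted in the discussion preceding (\ref{Aequation.4.3}) — and in checking the $p=2$ subcase split is exhaustive; both are routine given the figures but deserve care.
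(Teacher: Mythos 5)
Your high-level plan is the right one and matches the paper's: classify geodesics by the arc introduced by the first flip incident to $\alpha_n$, then dispose of the cases using Lemmas~\ref{Alemma.4.1}, \ref{Alemma.4.2}, \ref{Alemma.4.3} together with Theorem~\ref{Atheorem.2.1} and the isomorphisms $A_n^\pm\contract{n}\cong A_{n-1}^\pm$ and $A_n^\pm\contract{1}\contract{n-1}\cong A_{n-2}^\pm$. The applications of the lemmas in each case, the dichotomy extracted from Lemma~\ref{Alemma.4.2}, and the two-step deletion $\contract{1}$ then $\contract{n-1}$ in the $\alpha_1$-subcase are all exactly as in the paper.

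However, there is a real error in your set-up, and it is not just cosmetic. You assert that the arc introduced by the first flip incident to $\alpha_n$ has $a_n$ as one endpoint, with the other endpoint $a_p$ for $1\leq p<n$, and you describe the triangle of $T_{j-1}$ incident to $\alpha_n$ as ``bounded by $\alpha_n$, \dots\ boundary arc $\alpha_1$ \dots, and a loop at $a_1$.'' Neither is correct. In $A_n^-$ (and hence in $T_{j-1}$, since no earlier flip is incident to $\alpha_n$) the triangle incident to $\alpha_n$ is bounded by $\alpha_n$, by an interior arc $\alpha_n'$ from $a_1$ to $a_n$ isotopic to $\alpha_n$, and by the loop $\gamma$ at $a_1$; the only flippable arcs are $\alpha_n'$ and $\gamma$, and in both cases the replacement arc has $a_1$, not $a_n$, as one endpoint. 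So the correct parameter range is ``the introduced arc joins $a_1$ to $a_p$ with $2\leq p\leq n$,'' matching the hypotheses of Lemmas~\ref{Alemma.4.1} ($p=n$), \ref{Alemma.4.2} ($p=2$), and \ref{Alemma.4.3} ($2<p<n$). Taken literally, your cases $p=2$ and $2<p<n$ describe arcs $a_n$--$a_p$ that cannot actually arise, and the lemmas you cite are about arcs $a_1$--$a_p$; as written, the reduction to the three lemmas does not go through. Once the endpoint is corrected (and you note, as the paper does, that the case $2<p<n$ forces $n\geq4$ so Lemma~\ref{Alemma.4.3} applies), the rest of your argument is sound and coincides with the paper's, up to the minor reordering of whether one first splits off the ``at least $3$ flips on $\alpha_n$'' case.
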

\begin{proof}
Assume that $n\geq3$ and consider a geodesic $(T_i)_{0\leq{i}\leq{k}}$ from $A_n^-$ to $A_n^+$. If at least $3$ flips are incident to $\alpha_n$ along it, then Theorem \ref{Atheorem.2.1} yields
$$
d(A_n^-,A_n^+)\geq{d(A_{n-1}^-,A_{n-1}^+)+3}\mbox{.}
$$

Indeed, as mentioned before, $A_n^-\contract{n}$ and $A_n^+\contract{n}$ are respectively isomorphic to $A_{n-1}^-$ and $A_{n-1}^+$ via the same vertex relabeling. Hence the result holds in this case and we assume in the remainder of the proof that at most two flips are incident to $\alpha_n$ along $(T_i)_{0\leq{i}\leq{k}}$. By Lemma \ref{Alemma.4.1}, the first flip incident to $\alpha_n$ along $(T_i)_{0\leq{i}\leq{k}}$, say the $j$-th one, must introduce an arc with vertices $a_1$ and $a_p$ where $2\leq{p}<n$. If $p$ is equal to $2$ then, by Lemma \ref{Alemma.4.2} and Theorem \ref{Atheorem.2.1},
\begin{equation}\label{Aequation.4.4}
d(A_n^-,A_n^+)\geq{d(A_n^-\contract{1},A_n^+\contract{1})+4}\mbox{.}
\end{equation}

Observe that arc $\alpha_{n-1}$ is not incident to the same triangle in $A_n^-\contract{1}$ and in $A_n^+\contract{1}$. Therefore, there must be at least one flip incident to $\alpha_{n-1}$ along any geodesic between these triangulations. Therefore, invoking Theorem \ref{Atheorem.2.1},
\begin{equation}\label{Aequation.4.5}
d(A_n^-\contract{1},A_n^+\contract{1})\geq{d(A_n^-\contract{1}\contract{n-1},A_n^+\contract{1}\contract{n-1})+1}\mbox{.}
\end{equation}

As $A_n^-\contract{1}\contract{n-1}$ and $A_n^+\contract{1}\contract{n-1}$ are isomorphic to $A_{n-2}^-$ and $A_{n-2}^+$ via the same vertex relabeling, the result is obtained combining (\ref{Aequation.4.4}) and (\ref{Aequation.4.5}).

Finally, if the $j$-th flip introduces an arc with vertices $a_1$ and $a_p$, where $2<p<n$, then $n$ must be greater than $3$ and the result follows from Lemma \ref{Alemma.4.3}.
\end{proof}

We conclude the following.

\begin{theorem}\label{Atheorem.4.2}
Let $\Sigma$ be a one holed surface of genus $g>0$ with no interior marked points. The following inequality holds:
$$
\displaystyle\diam(\MFS)\geq\left\lfloor{\frac{5}{2}n}\right\rfloor-2\mbox{.}
$$
\end{theorem}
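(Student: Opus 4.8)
The plan is to prove, by strong induction on $n$, that $d(A_n^-,A_n^+)\geq\lfloor 5n/2\rfloor-2$ for every $n\geq 1$. Since $A_n^-$ and $A_n^+$ are both vertices of $\MFS$, this gives at once the announced inequality for $\diam(\MFS)$. The engine of the induction is Theorem~\ref{Atheorem.4.1}, so the work reduces to (i) an arithmetic check that the recursion there is consistent with the target function, and (ii) the two base cases $n\in\{1,2\}$.

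For (i), write $f(n)=\lfloor 5n/2\rfloor-2$. Because $\lfloor 5n/2\rfloor$ increases by exactly $5$ when $n$ increases by $2$, one has $f(n-2)+5=f(n)$ for all $n$; and a one-line check of the two parities gives $f(n-1)+3\geq f(n)$ for all $n$ (with equality when $n$ is even and the left side larger by $1$ when $n$ is odd). Hence $\min\{f(n-1)+3,\,f(n-2)+5\}=f(n)$. Consequently, if $n\geq 3$ and we already know $d(A_{n-1}^-,A_{n-1}^+)\geq f(n-1)$ and $d(A_{n-2}^-,A_{n-2}^+)\geq f(n-2)$, then Theorem~\ref{Atheorem.4.1} yields $d(A_n^-,A_n^+)\geq\min\{f(n-1)+3,\,f(n-2)+5\}=f(n)$, which is exactly the inductive step.

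It remains to handle the base. For $n=1$ nothing is needed, since $f(1)=0$; in fact one is free to choose the (otherwise arbitrary) triangulations $A_1^-$ and $A_1^+$ of $\Sigma_1$ to be distinct, so that $d(A_1^-,A_1^+)\geq 1$. For $n=2$ we must reach $d(A_2^-,A_2^+)\geq 3=f(2)$, and here I would argue as in Lemma~\ref{Alemma.4.1}. The complement of the loop based at $a_1$ in $A_2^-$ is a single triangle bounded by $\alpha_1$, $\alpha_2$ and that loop, and likewise for $A_2^+$ with the loop based at $a_2$. So the triangle incident to $\alpha_2$ has two boundary sides in both $A_2^-$ and $A_2^+$, but its interior side is a loop at $a_1$ in the first and a loop at $a_2$ in the second. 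Along any geodesic, the first flip incident to $\alpha_2$ must therefore flip that loop; this flip introduces an arc joining $a_1$ and $a_2$, so the triangle incident to $\alpha_2$ afterwards has only one boundary side, and at least one further flip incident to $\alpha_2$ is needed to restore a triangle with two boundary sides at $\alpha_2$. Thus at least two flips along the geodesic are incident to $\alpha_2$, and since $A_2^-\contract{2}$ and $A_2^+\contract{2}$ are isomorphic to $A_1^-$ and $A_1^+$, Theorem~\ref{Atheorem.2.1} gives $d(A_2^-,A_2^+)\geq d(A_1^-,A_1^+)+2\geq 3$.

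With both base cases in place the induction runs, and $\diam(\MFS)\geq d(A_n^-,A_n^+)\geq\lfloor 5n/2\rfloor-2$ for all $n$. I expect the only genuinely delicate point to be the $n=2$ case just sketched: pinning down the local picture around $\alpha_2$ in $A_2^-$ and $A_2^+$ and ruling out that a single flip incident to $\alpha_2$ suffices. The inductive step itself is essentially automatic once the floor-function arithmetic of (i) is recorded, which is what makes Theorem~\ref{Atheorem.4.1} — and behind it Lemmas~\ref{Alemma.4.1}--\ref{Alemma.4.3} — the real content of the section.
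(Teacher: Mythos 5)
Your overall strategy matches the paper's exactly: induction on $n$ driven by Theorem~\ref{Atheorem.4.1}, with the arithmetic check that $\min\{f(n-1)+3,\,f(n-2)+5\}=f(n)$ for $f(n)=\lfloor 5n/2\rfloor-2$ (which you verify correctly) and base cases at $n=1$ and $n=2$. The $n=1$ case is fine since $f(1)=0$. The problem is the $n=2$ case.

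There you show only that at least \emph{two} flips are incident to $\alpha_2$ along a geodesic from $A_2^-$ to $A_2^+$, and then invoke Theorem~\ref{Atheorem.2.1} to get $d(A_2^-,A_2^+)\geq d(A_1^-,A_1^+)+2$. To reach the target $f(2)=3$ you need $d(A_1^-,A_1^+)\geq 1$, and for this you claim the freedom to choose $A_1^-$ and $A_1^+$ distinct in $\MF(\Sigma_1)$. That freedom does not exist when $g=1$: as the paper points out in the final remark of Section~\ref{Asection.4}, a triangulation of the one holed torus $\Sigma_1$ consists of three triangles whose gluing pattern is forced, so $\MF(\Sigma_1)$ is a single point and $d(A_1^-,A_1^+)=0$ identically. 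For the torus your argument therefore only yields $d(A_2^-,A_2^+)\geq 2$, one short of what is needed, and the induction cannot start.

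The paper's base case closes this gap by establishing \emph{three} flips incident to $\alpha_2$ rather than two. The key extra observation is that the first and last flips incident to $\alpha_2$ are forced: the first removes the loop at $a_1$ (as you say), and, dually, the last must \emph{introduce} the loop at $a_2$. The triangle incident to $\alpha_2$ immediately after the first such flip has the arc $a_1a_2$ together with a loop still twice incident to $a_1$, whereas the triangle immediately before the last such flip has the arc $a_1a_2$ together with a loop twice incident to $a_2$. These triangles are distinct, so at least one further flip incident to $\alpha_2$ must occur strictly between them, giving three in total and hence $d(A_2^-,A_2^+)\geq 0+3=3$ uniformly in $g>0$. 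You would need to add this third-flip argument (analysing the last flip incident to $\alpha_2$ as well as the first) to make the base case hold for all $g>0$.
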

\begin{proof}
By Theorem \ref{Atheorem.4.1}, the result is easily proven by induction on $n$, provided it holds when $n$ is equal to $1$ or to $2$. In $n$ is equal to $1$, the it is obtained immediately because $\lfloor{5n/2}\rfloor=2$ and $\diam(\MF(\Sigma_1))\geq0$.

Assume that $n=2$.
\begin{figure}
\begin{centering}
\includegraphics{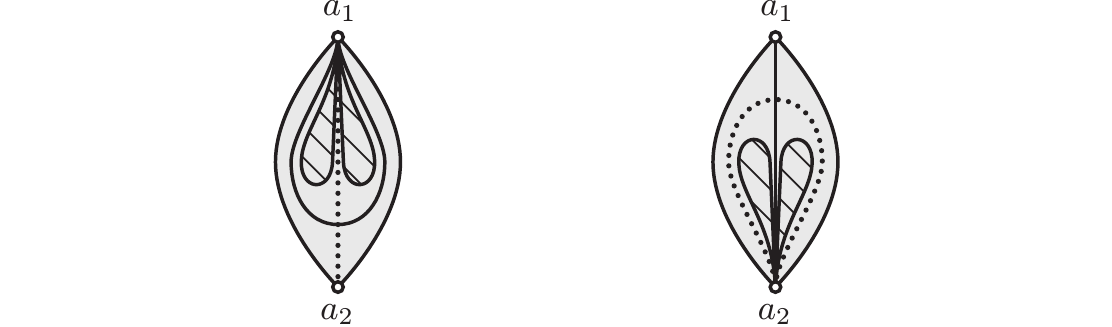}
\caption{The first (left) and last (right) flips incident to $\alpha_2$ along any geodesic from $A_2^-$ to $A_2^+$. In both cases, the introduced arc is dotted.}\label{Afigure.4.7}
\end{centering}
\end{figure}
In this case, we can prove that at least $3$ flips are incident to arc $\alpha_2$ along any geodesic from $A_2^-$ and $A_2^+$. Indeed, consider the triangle incident to $\alpha_2$ in $A_2^-$. The only arc of this triangle that can be flipped is the loop twice incident to $a_1$. As a consequence, the first flip incident to $\alpha_2$ along any geodesic from $A_2^-$ to $A_2^+$ is as shown on the left of Fig. \ref{Afigure.4.7}. Similarly, the last flip incident to $\alpha_2$ along a such geodesic must be the one depicted on the right of the figure. Note that these flips are distinct as they do not remove the same arc. Now consider the triangles incident to $\alpha_2$ in the triangulation obtained from the first of these flips and in the triangulation within which the second is performed. These triangles are distinct because one of them has an arc twice incident to $a_1$ and the other an arc twice incident to $a_2$. Hence, there must be a third flip incident to $\alpha_2$ along any geodesic from $A_2^-$ to $A_2^+$. It therefore follows from Theorem \ref{Atheorem.2.1} that $d(A_2^-,A_2^+)\geq3$, which is precisely the desired inequality when $n=2$.
\end{proof}

The additive constant in the lower bound provided by this theorem can be easily improved to take the genus into account. Indeed, recall that $A_1^-$ and $A_1^+$ have been chosen in an arbitrary way at the beginning of the section. However, as shown in \cite{DisarloParlier2014} (see Corollary 4.19 therein) the diameter of $\MF(\Sigma_1)$ is bounded below by $K\,g\log(g)$, for some positive constant $K$ and large enough $g$. Therefore, triangulations $A_1^-$ and $A_1^+$ can be chosen in such a way that, from the argument used in the proof of Theorem \ref{Atheorem.4.2}, the additive constant in our lower bound is $K\,g\log(g)-2$ instead of $-2$, provided $g$ is large enough.

Note, however, that this additive constant cannot be improved in the case of the one holed torus, i.e., when $g=1$. Indeed, in this case, a triangulation of $\Sigma_1$ is made up of three triangles: one of them is incident to the boundary arc and adjacent to the other two triangles $t^-$ and $t^+$. As $g=1$, the way $t^-$ and $t^+$ are glued to one another is prescribed. It follows that $\Sigma_1$ has exactly one triangulation up to homeomorphism and that the diameter of its modular flip-graph is $0$ (which is exactly the lower bound provided by Theorem \ref{Atheorem.4.2} when $n=1$).

\addcontentsline{toc}{section}{References}
\bibliographystyle{amsplain}
\bibliography{Torus}

{\em Addresses:}\\
Department of Mathematics, University of Fribourg, Switzerland \\
LIPN, Universit{\'e} Paris 13, Villetaneuse, France\\
{\em Emails:} \href{mailto:hugo.parlier@unifr.ch}{hugo.parlier@unifr.ch}, \href{mailto:lionel.pournin@univ-paris13.fr}{lionel.pournin@univ-paris13.fr}\\

\end{document}